\newtheorem{theorem}{Theorem}
\newtheorem{lemma}[theorem]{Lemma}
\newtheorem{prop}[theorem]{Proposition}
\newtheorem{conj}[theorem]{Conjecture}
\theoremstyle{definition}
\theoremstyle{remark}
\numberwithin{equation}{section}
\newcommand{\bibtitle}[1]{\emph{#1}}
\newcommand{\On}[1]{\mathcal{O}_{#1}}
\newcommand{\dfn}[1]{\textbf{#1}}
\newcommand{\ty}{\nabla\mathrm{Y}}
\newcommand{\yt}{\mathrm{Y}\nabla}
\newcommand{\yty}{\mathrm{Y}\nabla\mathrm{Y}}
\begin{document}

\title{The $K_{n+5}$ and $K_{3^2,1^n}$ families are obstructions to $n$-apex.}



\author{Thomas W.\ Mattman}
\address{Department of Mathematics and Statistics,
California State University, Chico,
Chico, CA 95929-0525}
\email{TMattman@CSUChico.edu}

\author{Mike Pierce}
\address{Department of Mathematics,
UC, Riverside,
900 University Avenue,
Riverside, CA 92521}
\email{mpierce@math.ucr.edu}

\subjclass[2000]{05C10}

\date{}

\begin{abstract}
We give evidence in support of a conjecture that the $\yty$ families of $K_{n+5}$ and $K_{3^2,1^n}$ are minor minimal obstructions for the $n$-apex property
\end{abstract}

\maketitle

\section*{Introduction}
A graph is \dfn{$n$-apex} if deleting $n$ or fewer vertices results in a 
planar graph. Kuratowski~\cite{K} showed the $0$-apex or \dfn{planar} graphs are characterized by the obstruction set 
$\On{0} = \{K_5, K_{3,3}\}$. Using the formulation of Wagner~\cite{W}, a graph is planar if and only if it has neither $K_5$ nor $K_{3,3}$
as a minor. Recall that $H$ is a \dfn{minor} of the graph $G$ if $H$ can 
be obtained by contracting edges in a subgraph of $G$. As the $n$-apex property
is minor closed, it follows from the Graph Minor Theorem of Robertson and Seymour~\cite{RS} that there is a corresponding finite obstruction set $\On{n}$.
For each $n$, $\On{n}$ is the set of graphs that are not $n$-apex even though 
every proper minor is. We say that the elements of $\On{n}$ are \dfn{minor minimal
not $n$-apex} or MMNnA.

Determining this obstruction set is likely to be quite difficult. It's known that, already,
 $|\On{1}| > 150$~\cite{P}, and we expect that 
 $|\On{n}|$ grows quickly in size with $n$.
In the current paper we report on computer searches that show $|\On{2}| \geq 82$,
$|\On{3}| \geq 601$,  $|\On{4}| \geq 520$, and $|\On{5}| \geq 608$.

\begin{figure}[ht]
 \centering
 \includegraphics[scale=0.8]{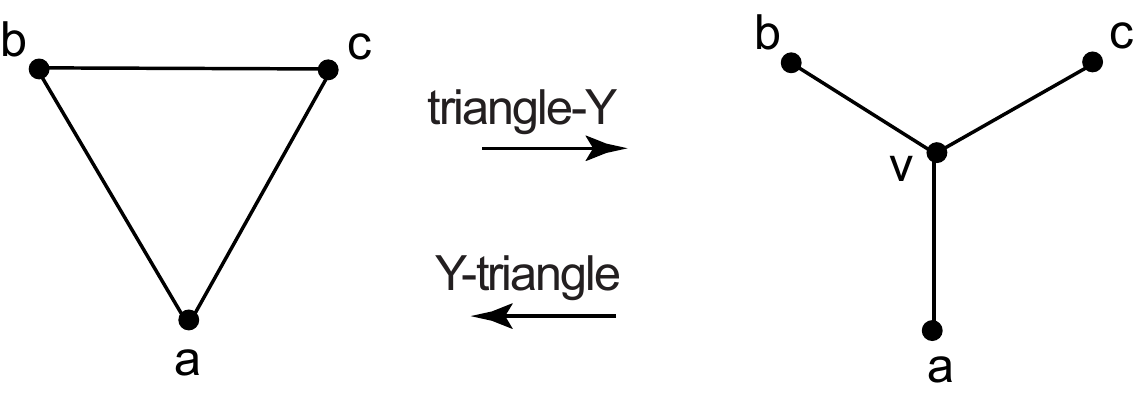}
 \caption{$\ty$ and $\yt$ moves.
 \label{figtymove}
 }
\end{figure}

We obtain these bounds on the size of $\On{n}$ using the $\yty$ families 
of $K_{n+5}$, the complete graph on $n+5$ vertices, and $K_{3^2,1^n}$, 
the complete multi-partite graph with two parts of three vertices and a further $n$
parts that each have a single vertex. As in Figure~\ref{figtymove},
we perform a $\ty$ or \dfn{triangle-Y move} on a graph by deleting the edges of a triangle $abc$ and then adding a vertex $v$ and edges $av$, $bv$, and $cv$. The reverse operation is a $\yt$ or \dfn{Y-triangle move}. The set of graphs
obtained from a graph $G$ through a sequence of zero or more $\ty$ or $\yt$ 
moves is \dfn{$G$'s ($\yty$) family}. 

In this paper we provide evidence in support of the following conjecture.
\begin{conj}
\label{conmain}%
For each $n > 0$, the $K_{n+5}$ and $K_{3^2,1^n}$ families are in $\On{n}$.
\end{conj}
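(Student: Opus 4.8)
The plan is to prove the conjecture in two stages: first that the two ``seed'' graphs $K_{n+5}$ and $K_{3^2,1^n}$ themselves lie in $\On{n}$, and second that $\On{n}$ is closed under $\ty$ and $\yt$ moves. Since every member of a $\yty$ family is reached from its seed by a finite sequence of such moves, the conjecture then follows by induction on the length of that sequence.

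For the first stage, membership in $\On{n}$ splits, as for any minor minimal obstruction, into showing (i) the graph is not $n$-apex and (ii) that $G-e$ and $G/e$ are $n$-apex for every edge $e$ (deletion of a non-isolated vertex is a minor of an edge deletion, and these graphs have no isolated vertices, so (i) and (ii) suffice). For (i): $K_{n+5}$ has $n+5$ vertices, and deleting any $n$ of them leaves $K_5$, which is non-planar; $K_{3^2,1^n}$ has $n+6$ vertices, and deleting any $n$ of them leaves a six-vertex complete multipartite graph obtained by shrinking the two triples, so one runs through the short list $K_{3,3}$, $K_{3,2,1}$, $K_{3,1,1,1}$, $K_{2,2,1,1}$, $K_{2,1,1,1,1}$, $K_6$ and notes that each has $K_{3,3}$ or $K_5$ as a subgraph, or has more than $3\cdot 6-6=12$ edges, hence is non-planar. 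For (ii) I would write down an explicit apex set in each case: $K_{n+5}-e$ becomes the planar $K_5-e$ once the $n$ vertices off $e$ are deleted, and $K_{n+5}/e=K_{n+4}$ drops to $K_4$ after deleting $n$ vertices; for $K_{3^2,1^n}$ one argues by the orbit of $e$ under the automorphism group (joining the two triples, joining a triple to a singleton, or joining two singletons) and deletes an $n$-set so that the residue is $K_{3,3}-e$, or $K_{3,1,1,1}$ minus an edge meeting its triple (which one checks by hand is planar), or the octahedron $K_{2,2,2}$; the contractions collapse $K_{3^2,1^n}$ to $K_{2,2,1^{n+1}}$, $K_{2,3,1^n}$, or $K_{3,3,1^{n-1}}$, each of which becomes $K_{2,3}$ or $K_{2,2,1}$ after deleting $n$ suitable vertices.

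The second stage is the real content: the closure lemma, that $G\in\On{n}$ together with a single $\ty$ or $\yt$ move produces another member of $\On{n}$. Take the $\ty$ case, $G'$ being $G$ with a triangle $abc$ replaced by a new degree-three vertex $v$ joined to $a,b,c$. Since a $\yt$ move preserves planarity --- the triangle can be redrawn inside the face that opens up when $v$ and its three edges are erased --- a $\ty$ move preserves non-planarity. To see $G'$ is not $n$-apex, push a hypothetical apex set $S$ of size at most $n$ for $G'$ down to one for $G$: when $v\notin S$, replacing $v$ by the surviving part of the triangle (a $\yt$ move, a smoothing, or the deletion of a pendant or isolated vertex, depending on how many of $a,b,c$ lie in $S$) turns $G'-S$ into $G-S$ and keeps it planar, a contradiction; the remaining case, $v\in S$, needs extra input, for instance that $G$ with the edges of $abc$ removed is still non-planar. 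For minor minimality of $G'$, check that $G'-e$ and $G'/e$ are $n$-apex for each edge $e$: deleting or contracting one of the new edges $va,vb,vc$ returns $G$ with at most two triangle edges removed (up to smoothing a degree-two vertex), which is $n$-apex since $G$ is minor minimal, while for an old edge $e$ one gets the $\ty$-image of the corresponding $n$-apex minor of $G$, and must argue this image is still $n$-apex. The $\yt$ case is handled dually.

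The step I expect to be the main obstacle --- and, I suspect, why this is a conjecture rather than a theorem --- is the recurring need to control how $\ty$ moves change minor structure. A $\ty$ move can create new minors, so that $\ty$ of an $n$-apex graph need not be $n$-apex, and it can introduce new minor minimal obstructions as proper minors; both are exactly what fails at $n=0$, where $\ty(K_5)=K_{3,3}+e$ has the new obstruction $K_{3,3}$ as a proper minor and $\yt(K_{3,3})=K_5-e$ is even planar. A correct proof must therefore genuinely use $n\ge 1$, presumably through rigidity special to the families $K_{n+5}$ and $K_{3^2,1^n}$ --- for instance that no member becomes planar after deleting a triangle's edges, a statement one can check directly in the spirit of the seed computations. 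A realistic route is thus to prove the closure lemma only for these two families, by the kind of case analysis that settles the seeds, supplemented by machine verification of minor minimality for small $n$, which is the sort of evidence the present paper reports.
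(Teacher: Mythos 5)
The statement you set out to prove is, in the paper, a conjecture: the paper never proves it in full, and your proposal does not either, because its second stage is exactly the open part. Your first stage is fine and runs parallel to the opening of the paper's Theorem~\ref{thm3kids}: $K_{n+5}$ minus any $n$ vertices is $K_5$, $K_{3^2,1^n}$ minus any $n$ vertices is a nonplanar complete multipartite graph on six vertices, and explicit apex sets handle minimality. But the closure step --- that a $\ty$ or $\yt$ move applied to a member of $\On{n}$ in these families yields another member of $\On{n}$ --- is precisely what nobody can prove, and you concede as much in your last paragraph. It is false for general graphs: the paper's example $\sqcup_{i=1}^{n+1}K_{3,3}$ and $\sqcup_{i=1}^{n+1}K_5$ shows both directions fail for MMNnA graphs, not just at $n=0$. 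The only general tool available is one-directional (the paper's Lemma~\ref{lemnapex}: a $\yt$ move at a vertex \emph{not} in some apex set preserves the $n$-apex property), and using it to show a descendant is not $n$-apex requires ruling out that the new degree-three vertex is an apex --- which the paper does graph-by-graph (e.g.\ showing $C_5-9$ is not $2$-apex, or the ad hoc Lemma~\ref{lemD3} for $D'_3$ and $D''_3$), not by any structural ``rigidity'' of the two families. Likewise, minor minimality of a descendant does not transfer from the parent: your step ``the $\ty$-image of an $n$-apex minor is still $n$-apex'' is exactly the failure mode you yourself flag, and the paper instead exhibits explicit three-vertex (or five-vertex) apex sets for every edge deletion and contraction, one graph at a time.

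So the honest assessment is that your proposal reduces the conjecture to a closure lemma that is known to be false in general and unproven (indeed, not even formulated with a plausible mechanism) for these families. What the paper actually establishes is much weaker than the conjecture: Proposition~\ref{prong2} for $n\le 2$ (leaning on intrinsic knotting results and the $K_7$-family classification), Theorem~\ref{thm3kids} for the two seeds and their three immediate descendants for every $n$, a computer verification of all $601$ graphs for $n=3$, and partial computer checks for $n=4,5$. If you want to contribute a proof of new cases rather than a restatement of the difficulty, the realistic targets are of the type the paper handles: fix a particular descendant, show directly (via Lemma~\ref{lemnapex} plus a verification that deleting the new vertex leaves a graph that is not $(n-1)$-apex, or via clique/counting arguments as in the proofs for $C_2$, $C_3$, $D_1$--$D_3$) that it is not $n$-apex, and then certify minimality edge by edge with explicit apex sets.
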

When $n=0$, there are 49 graphs in the $K_5$ family and 10 in $K_{3,3}$'s. However, $\On{0} = \{K_5, K_{3,3} \}$ as proved by Weber~\cite{W} and the analogue
of the conjecture does not hold in this case.
On the other hand, a proof for $n \leq 2$ follows easily from earlier work.
\begin{prop} 
\label{prong2}%
The conjecture holds for $n\leq 2$.
\end{prop}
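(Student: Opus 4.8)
The plan is to treat $n=1$ and $n=2$ in turn, verifying for each graph $G$ in the relevant families the two conditions for membership in $\On{n}$: that $G$ is not $n$-apex, and that every proper minor of $G$ is. A preliminary observation reduces the workload: a $\ty$ move preserves the number of edges, so every member of the $K_6$ family has $15$ edges, and since $K_{3^2,1}=K_{3,3,1}$ likewise has $15$ edges and is obtained from $K_6$ by such moves, the $K_6$ and $K_{3^2,1}$ families are one and the same --- the Petersen family of seven graphs. For $n=2$ the $K_7$ family ($21$ edges) and the $K_{3^2,1^2}=K_{3,3,1,1}$ family ($22$ edges) are disjoint, so two families must be handled. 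In every case the ``seed'' graphs are the easy part: $K_{n+5}$ is not $n$-apex because deleting any $n$ of its vertices leaves $K_5$, and it is minor minimal because every proper minor of $K_{n+5}$ is a minor of $K_{n+4}$ or of $K_{n+5}-e$, each of which becomes planar after deleting $n$ suitably chosen vertices; a similar but longer check --- classifying which subgraphs on five or six vertices of $K_{3^2,1^n}$ are planar --- shows $K_{3^2,1^n}\in\On{n}$ as well, for every $n\ge 0$.

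For $n=1$ I would then invoke the known fact that the whole Petersen family lies in $\On{1}$. The half asserting that no Petersen-family graph is $1$-apex is essentially automatic: a $1$-apex graph is linklessly embeddable (embed $G-v$ in a plane in $3$-space and cone $v$ off to one side, so that every cycle through $v$ bounds a disk meeting the plane only along its planar subpath and hence is unlinked from each disjoint cycle), while by the Robertson--Seymour--Thomas theorem the Petersen family is exactly the set of minor-minimal intrinsically linked graphs --- none of which can be $1$-apex. Minor minimality for the seven explicit graphs is then a finite verification, recorded in the study of $\On{1}$ in~\cite{P}.

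For $n=2$ the seeds are covered by the first paragraph, and the remaining task is to propagate ``not $2$-apex'' and minor minimality across the $\ty$ and $\yt$ moves that build the $K_7$ and $K_{3,3,1,1}$ families. For the former I would use intrinsic knotting: a $\ty$ move preserves intrinsic knottedness, so every member reachable from $K_7$ or $K_{3,3,1,1}$ by $\ty$ moves is intrinsically knotted and therefore not $2$-apex, since $2$-apex graphs are known to be knotlessly embeddable; the finitely many remaining members, and minor minimality throughout, are then settled graph by graph.

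The step I expect to be the real obstacle --- and the reason the argument does not extend past $n=2$ --- is precisely this propagation. No clean lemma says that $\ty$ and $\yt$ moves preserve membership in $\On{n}$: already at $n=0$ a $\yt$ move can carry a non-planar graph to a planar one (for instance $K_{3,3}$ to $K_5-e$, both of which have nine edges), and both move types routinely destroy minor minimality, so that only $K_5$ among the forty-nine graphs of the $K_5$ family, and only $K_{3,3}$ among the ten of its family, lies in $\On{0}$. For $n\le 2$ one sidesteps this using the external characterizations above --- linkless embeddability at $n=1$, knotless embeddability at $n=2$ --- together with explicit computation on the small families; for $n\ge 3$ no comparable characterization is known, the families grow rapidly in size, and one is left needing the direct structural statement of Conjecture~\ref{conmain} itself.
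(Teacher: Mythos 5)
Your overall strategy is viable and leans on the same external tools as the paper (linkless embeddability for $n=1$, knotless embeddability for $n=2$), but the two arguments diverge exactly where the real work lies. For $n=1$ your coning argument plus the Petersen-family characterization is fine and amounts to what the paper imports from \cite{BM}. For $n=2$, however, the paper does not propagate intrinsic knotting along $\ty$ moves itself: it cites \cite{BM} for the stronger statement that the $K_7$ family is precisely the set of graphs in $\On{2}$ with at most 21 edges, and cites \cite{GMN} for the fact that all 58 graphs of the $K_{3^2,1^2}$ family---descendants of $K_{3^2,1^2}$ or not---are minor minimal intrinsically knotted, hence not 2-apex. That matters because your $\ty$-propagation reaches only descendants, and in the $K_7$ family the six cousins that are not descendants of $K_7$ are not intrinsically knotted at all (only $K_7$ and its 13 descendants are \cite{KS}), so no knotting argument can certify that they fail to be 2-apex; your ``graph by graph'' fallback is carrying genuine weight there, not just mopping up. The larger divergence is minor minimality of the $K_{3^2,1^2}$ family: you propose verifying directly that every deletion and contraction of each of the 58 graphs is 2-apex, whereas the paper gets this almost for free by combining MMIK with the size bound of \cite{BM}---a proper minor that failed to be 2-apex would contain an element of $\On{2}$ with at most 21 edges, hence a $K_7$-family graph; fourteen of those are intrinsically knotted and so cannot occur as proper minors of an MMIK graph, leaving only six graphs to exclude by a short direct check. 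Both routes are legitimate, but yours trades the paper's leverage of prior classifications for a much larger (if finite and routine) verification, and as written that verification is asserted rather than performed.
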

\begin{proof}
For $n=1$, the 
$K_6$ and $K_{3,3,1}$ families coincide. This collection of seven graphs is 
known as the Petersen family since it also includes the Petersen graph.
Barsotti and Mattman~\cite{BM} observed that this family is in $\On{1}$
on the way to proving that the $K_7$ family is in $\On{2}$. In fact,
they showed that the $K_7$ family is exactly the set of graphs in $\On{2}$ 
of size 21 or less.

It remains to show that $K_{3^2,1^2}$ is in $\On{2}$. In \cite{GMN}, 
Goldberg, Mattman, and Naimi
show that the 58 graphs in this family are all minor minimal intrinsically knotted
or MMIK. This implies 
they are not $2$-apex, see~\cite{BBFFHL,OT}. 
Since $K_7$ and its 13 descendants are also MMIK~\cite{KS},
this means that none of these 14 graphs can be a minor of a $K_{3^2,1^2}$ 
family graph.

So far, we have established that each of the size 22 graphs in the $K_{3^2,1^2}$ family is not $2$-apex. If one of these were not in $\On{2}$, then it would
have to have a $2$-apex minor of size at most 21. By \cite{BM}, this minor
would lie in the $K_7$ family. We've already argued that 14 of the 20 graphs
in that family cannot play this role. We omit the straightforward verification
that none of the remaining six graphs in the $K_7$ family is a minor of a 
member of the $K_{3^2,1^2}$ family.
\end{proof}

We verify the conjecture for $n = 3$ with the aid of a computer, as
described in Section~2 below. 
For $n = 4$, we checked that the 163 graphs in the $K_9$ family and 357 of the graphs in the $K_{3^2,1^4}$ family (all those within four $\ty$ or $\yt$ moves of
$K_{3^2,1^4}$) are in $\On{4}$. 
For $n = 5$, the computer assures us that 608 of the 1681 graphs in $K_{10}$'s family are in $\On{5}$. This gives the bounds for $\On{4}$ and $\On{5}$ mentioned earlier. Due to time constraints, we did not attempt to 
check more graphs in the $K_{10}$ family nor in the $K_{3^2,1^4}$ family of more than 13 thousand graphs.

It is straightforward to see that $K_{n+5}$ and $K_{3^2,1^n}$ themselves are 
in $\On{n}$. We also verify this for the three immediate descendants of 
each of these two graphs in the following theorem, which we prove
in Section~1. Recall that when a sequence of 
$\ty$ moves take us from $G_1$ to $G_2$, we say $G_2$ is a 
\dfn{descendant} of $G_1$.
\begin{theorem} 
\label{thm3kids}%
Let $n > 0$. The graphs $K_{n+5}$ and $K_{3^2,1^n}$ 
and the three 
immediate descendants of each are elements of $\On{n}$
\end{theorem}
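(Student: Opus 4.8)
The plan is to verify, for each of the graphs $G$ named in the statement, the two conditions defining membership in $\On{n}$: that $G$ is not $n$-apex, and that every proper minor of $G$ is $n$-apex. Since $n$-apex is minor closed, the second condition reduces to checking the finitely many single-step minors $G-u$, $G-e$, $G/e$, and since each $G$ here has a large automorphism group only a few representatives of each kind must be inspected. ``Not $n$-apex'' will always be witnessed by exhibiting a $K_5$ or $K_{3,3}$ minor in $G-S$ for every $n$-element vertex set $S$, and ``$n$-apex'' by naming an explicit $S$ together with a planar embedding (possibly after suppressing degree-$\le 2$ vertices) of what remains.

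The base graphs are quick, as the authors note. Deleting any $n$ vertices of $K_{n+5}$ leaves a complete graph on at least five vertices, so $K_{n+5}$ is not $n$-apex; and $K_{n+5}-u = K_{n+5}/e = K_{n+4}$ is $n$-apex (remove $n$ more vertices to reach $K_4$), while $K_{n+5}-e$ is $n$-apex (keep the endpoints of $e$ and three other vertices, leaving the planar graph $K_5-e$). For $K_{3^2,1^n}$ the same pattern works with $K_{3,3}$ in place of $K_5$: a short case split on how many vertices of $S$ fall in the two triples shows $K_{3^2,1^n}-S$ always contains a $K_{3,3}$ subgraph; $K_{3^2,1^n}-u$ and $K_{3^2,1^n}/e$ are among $K_{3^2,1^{n-1}}$, $K_{2,3,1^n}$, $K_{2,2,1^{n+1}}$, each visibly $n$-apex; and for $K_{3^2,1^n}-e$ one deletes the remaining singleton vertices together with one or two more vertices, reaching a planar six-vertex graph --- in the two-singleton case this is the octahedron $K_{2,2,2}$.

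The substance of the theorem is the immediate descendants. Write $H=\ty(G)$, where the new degree-three vertex $v$ is joined to the vertices $a,b,c$ of a triangle of $G$ whose three edges have been removed. The one structural fact used throughout is that the $\yt$ move --- suppressing a degree-three vertex into a triangle on its neighbours --- preserves planarity. Suppose $H-S$ is planar with $|S|=n$. If $v\notin S$ and $a,b,c\notin S$, then $v$ still has degree three, and the $\yt$ move at $v$ turns $H-S$ into the planar graph $G-S$, contradicting that $G$ is not $n$-apex. If $v\notin S$ but $S$ contains a neighbour of $v$, then $v$ has degree at most two and may be suppressed, leaving $G$ with one or two triangle-edges deleted minus at most $n$ vertices; suppressing the affected triangle vertices as their degrees drop, the earlier $K_5$/$K_{3,3}$ analysis applies and shows this is non-planar. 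If $v\in S$, then $H-S$ equals $G$ with its triangle's edges removed minus a further $n-1$ vertices; at least six vertices survive, and one checks directly that they span a non-planar graph (for $K_{n+5}$ this is $K_6$ minus a triangle, or $K_6$, or $K_6-e$; for $K_{3^2,1^n}$ a similar, slightly longer, inspection). Hence $H$ is not $n$-apex.

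For minimality of $H$: deleting a vertex outside $\{v,a,b,c\}$ or contracting an edge disjoint from $\{v,a,b,c\}$ produces $\ty$ of a smaller base graph (such as $\ty(K_{n+4})$), which is $n$-apex; deleting or contracting an edge of the $\ty$-gadget, or an edge meeting the triangle, reduces --- after suppressing $v$ once its degree drops --- to $\ty$ applied to $G-e$ or to a small explicit graph, and one chooses the $n$-element apex set so that it also contains at least two of $a,b,c$, which makes $v$ pendant and lets the planar embedding from the base case survive. I expect the main obstacle to be the bulk and uniformity of this casework rather than any one step: the edge-deletion and edge-contraction sub-cases must be arranged so a single argument covers all $n$, and for $K_{3^2,1^n}$ the several non-isomorphic immediate descendants (one per type of triangle) must be carried through in parallel. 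Isolating a clean lemma on how $\ty$ interacts with vertex deletion, vertex suppression, and planarity is what keeps this under control.
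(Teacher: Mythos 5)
Your outline of the base cases and of the single-$\ty$-move descendants is sound and close in spirit to the paper's argument: the reduction ``if $v\notin S$ then suppress $v$ and compare with $G-S$'' is exactly the contrapositive of the paper's Lemma~\ref{lemnapex}, and your direct inspection of the $v\in S$ case (seven or six surviving vertices of the base graph with the triangle edges removed) does check out for the children. But there is a genuine gap in coverage. Because every triangle of $K_{n+5}$ is equivalent under its automorphism group, $K_{n+5}$ has only \emph{one} child $C_1$; the ``three immediate descendants'' of $K_{n+5}$ in the statement are $C_1$ together with the two children of $C_1$, i.e.\ two graphs obtained by \emph{two} successive $\ty$ moves. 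Your framework, which treats each descendant as $H=\ty(G)$ for $G$ a base graph and leans on the complete or complete-multipartite structure of $G$ in the $v\in S$ case (``$K_6$ minus a triangle, or $K_6$, or $K_6-e$''), says nothing about these grandchildren. Handling them is where most of the work lies: for $\ty(C_1)$ the case $v\in S$ requires analyzing $C_1$ minus a second triangle minus $n-1$ further vertices, and the paper does this by cutting down to the $K_7$ family (already known to lie in $\On{2}$) and by directly certifying that certain fixed $8$-- and $10$--vertex graphs are not $1$-apex, respectively not $3$-apex. Your plan as written proves the theorem for six of the eight graphs at best.

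A secondary concern is the minimality step for the multipartite children. Your mechanism --- choose the apex set for $H-e$ or $H/e$ to contain two of $a,b,c$ so that $v$ becomes pendant and the problem reduces to an apex set for a proper minor of $G$ --- is plausible for $C_1$ and for edges near the $\ty$ gadget, but it does not go through uniformly for the child $D_3$ of $K_{3^2,1^n}$ obtained from a triangle on three singletons. There, edges between two former singletons outside the triangle are not edges of any proper minor of the base graph in the way your reduction needs, and the paper instead proves a dedicated lemma that two specific fixed graphs $D'_3$ and $D''_3$ (members of the $K_{3^2,1^3}$ and $K_{3^2,1^5}$ families) lie in $\On{3}$ and $\On{5}$, then realizes every edge of $D_3$ inside $D''_3$. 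Some such anchoring at fixed small $k$ (here $k$ runs up to $6$ in the non-apex argument for $D_3$) seems unavoidable, and your proposal does not identify it; the ``slightly longer inspection'' you defer is in fact a collection of non-$k$-apex certifications that carry real content and cannot be absorbed into the planarity-preservation lemma.
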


It would be tempting to argue that $\ty$ and $\yt$ moves preserve the MMNnA 
property that characterizes $\On{n}$. Unfortunately, this is not true in general. For example, the disjoint unions $G_3 = \sqcup_{i=1}^{n+1} K_{3,3}$ and 
$G_5 = \sqcup_{i=1}^{n+1} K_5$ are both MMNnA. However, a $\yt$ move on 
$K_{3,3}$ results in a planar graph and, similarly, a $\yt$ move on $G_3$ gives a 
graph that is not in $\On{n}$. On the other hand, a $\ty$ move on $K_5$ results in a graph that is nonplanar, but with proper $K_{3,3}$ minor. So, the child of $G_5$ 
cannot join it in $\On{n}$.

On the other hand, it's easy to see that $\yt$ moves preserve planarity
(or, equivalently, $\ty$ preserves nonplanarity). 
If $G$ is planar, and a $\yt$ move on the degree 3 vertex $v$
results in $G'$ (We call this a \dfn{$\yt$ move at $v$}.)
then, fixing a planar embedding of $G$, we can effect the 
$\yt$ move in a tubular neighborhood of $v$ and its three edges to give
a planar embedding of $G'$. 

Using this idea we can show that a $\yt$ move at $v$ preserves the $n$-apex
property provided $v$ is not an apex. For $1$-apex graphs,
this observation is due to Barsotti in unpublished work. For $W \subset V(G)$,
let $G-W$ denote the induced subgraph on $V(G) \setminus W$.
\begin{lemma}
\label{lemnapex}%
Suppose $G$ is $n$-apex with subset $U$ of $V(G)$ such that
$|U| \leq n$ and $G-U$ is planar. If $G'$ is obtained from $G$
by a $\yt$ move at the vertex $v \not\in U$   
then $G'$ is also $n$-apex. 
\end{lemma}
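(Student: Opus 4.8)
The plan is to verify that the very same set $U$ witnesses the $n$-apex property of $G'$. Since the $\yt$ move is performed at $v \notin U$, we have $U \subseteq V(G) \setminus \{v\} = V(G')$ and $|U| \le n$, so it suffices to show that $G' - U$ is planar. Write $abc$ for the triangle created by the move, so that $a,b,c$ are the three neighbors of $v$ in $G$ and $E(G') = \bigl(E(G) \setminus \{va, vb, vc\}\bigr) \cup \{ab, bc, ca\}$. A direct check of which vertices and edges survive the deletion of $U$ shows that, with $S := \{a,b,c\} \setminus U$, the graph $G' - U$ is obtained from $G - U$ by deleting $v$ and then adding all edges joining vertices of $S$.

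I would then establish planarity of $G'-U$ by cases on $|S|$. If $|S| \le 1$, no new edge is added, so $G' - U = (G-U)-v$ is a subgraph of the planar graph $G - U$ and hence planar. If $|S| = 2$, then the neighbors of $v$ in $G - U$ are exactly the two vertices of $S$, so $v$ has degree two there, and $G' - U$ is obtained from $G - U$ by suppressing $v$, i.e. deleting $v$ and joining its two neighbors; since suppressing a degree-two vertex is the inverse of subdividing an edge, it preserves planarity. If $|S| = 3$, then $a,b,c \notin U$, so $v$ still has degree three in $G - U$ with neighbors $a,b,c$, and $G' - U$ is precisely the result of a $\yt$ move at $v$ performed inside $G - U$; by the observation recorded just before the lemma this preserves planarity (realize the move in a tubular neighborhood of $v$ and its three edges in a fixed planar embedding of $G - U$). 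In all three cases $G' - U$ is planar, so $G'$ is $n$-apex with apex set $U$.

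The one point requiring care is that deleting $U$ can drop the degree of $v$ below three, so one cannot invoke ``$\yt$ preserves planarity'' as a black box: the case $|S| = 2$ genuinely needs the separate suppression argument, and the case $|S| \le 1$ is handled by monotonicity of planarity under taking subgraphs. (If the ambient graphs are required to be simple, the move may additionally create a parallel edge when some side of $abc$ already lies in $G$; this is harmless, as discarding the duplicate affects neither planarity nor the minor order.) I expect none of these steps to be difficult; the substance of the lemma is the bookkeeping that identifies $G' - U$ with the appropriate modification of $G - U$, after which the three cases are routine.
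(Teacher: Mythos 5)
Your proof is correct and follows essentially the same route as the paper's: keep the same apex set $U$ and case on how many of $v$'s neighbors survive deletion of $U$, using the subgraph, degree-two suppression, and tubular-neighborhood $\yt$ arguments respectively. Your write-up is somewhat more explicit about the bookkeeping (and the possible parallel edge), but the substance is identical.
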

\begin{proof}
The key observation is that the $\yt$ move preserves planarity 
so that $G'-U$ is again planar and 
$G'$ is $|U|$-apex. The details depend on how many of $v$'s neighbors, $a$, $b$,
and $c$, are in $U$. 

If none are, the argument is the
same as above: we can effect the $\yt$ move in a tubular neighborhood
of $v$ and its edges. If exactly one is, say $a \in U$, then the $\yt$ move
replaces the edges $vb$ and $vc$ in $G-U$ with the edge $bc$ in $G'-U$ so 
that planarity is preserved. Similarly, if two or all three neighbors are in $U$,
the $\yt$ move will leave $G'-U$ planar.
\end{proof}
Although $\yt$ and $\ty$ moves do not preserve MMNnA in general, 
our conjecture asserts that they do in the $K_{n+5}$ and $K_{3^2,1^n}$ family.

We describe results for $\On{2}$ and $\On{3}$ in Section~2.
This includes classifications of all the graphs 
in $\On{2}$ through order 9 and size 23. 
We also give a proof ``by hand" that the 32 graphs in the 
$K_8$ family are in $\On{3}$. This, with the computer verification for 
the $K_{3^2, 1^3}$ family, shows that our conjecture holds for $n = 3$.
But first we prove Theorem~\ref{thm3kids} in the next section.

\section{Proof of Theorem~\ref{thm3kids}}

In this section we prove that $K_{n+5}$, $K_{3^2,1^n}$, and their three immediate
descendants are in $\On{n}$.
In the introduction, we gave the proof for $n \leq 2$,
so we will assume $n \geq 3$. Then, the three descendants of $K_{3^2,1^n}$ are
its \dfn{children}, the graphs obtained by a single $\ty$ move. The graph
$K_{n+5}$ has a single child, $C_1$, which in turn has two children. This child and
pair of grandchildren are the three immediate descendants of $K_{n+5}$ referred to
in the statement of Theorem~\ref{thm3kids}. 

\begin{figure}[ht]
 \centering
 \includegraphics[scale = 0.5]{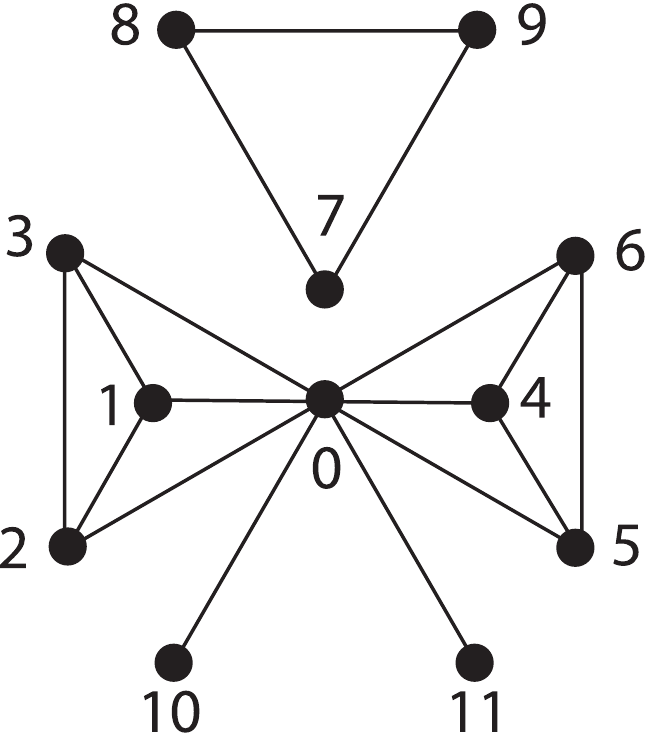}
 \caption{The complement of the graph $D''_3$.
 \label{figD3}%
 }
\end{figure}

We begin with a lemma that will be useful for one of the children of $K_{3^2,1^n}$ and which gives the flavor of our approach.
\begin{lemma}
\label{lemD3}%
Let $D''_3$ denote the complement of the $(12, 17)$ graph of Figure~\ref{figD3}
and $D'_3$ the induced subgraph on $V(D''_3) \setminus \{v_{10},v_{11}\}$. 
Then $D'_3 \in \On{3}$ and $D''_3 \in \On{5}$.
\end{lemma}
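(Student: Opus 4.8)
The plan is to verify the two membership claims $D'_3 \in \On{3}$ and $D''_3 \in \On{5}$ directly, in each case by (i) showing the graph is not $n$-apex, and (ii) showing every proper minor is $n$-apex; since the $n$-apex property is minor closed, for step (ii) it suffices to check that every graph obtained from $G$ by a single edge deletion, a single edge contraction, or deletion of an isolated vertex is $n$-apex. I would organize the argument around the small explicit graph $D''_3$, the $(12,17)$ complement pictured in Figure~\ref{figD3}, and treat $D'_3$ as the induced subgraph on ten of its vertices.

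First I would handle $D'_3 \in \On{3}$. To see $D'_3$ is not $3$-apex, I would show that for every choice of a $3$-element subset $W \subseteq V(D'_3)$, the induced graph $D'_3 - W$ is nonplanar; with only $10$ vertices this is a finite, tractable check, and one expects a uniform reason (e.g.\ a robust $K_5$ or $K_{3,3}$ minor that survives the deletion of any three vertices, exploiting the high symmetry coming from the two $K_{3,3}$-like blocks in the complement description). For minor-minimality, I would run through the single-edge-deletion and single-edge-contraction children: for each such child $H$ I must exhibit an explicit apex set of size $\le 3$ making $H - W$ planar, typically by deleting one endpoint of the removed/contracted edge together with a couple of well-chosen high-degree vertices and then giving (or citing the obvious) planar embedding of what remains. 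Isolated-vertex deletion does not arise here since $D'_3$ has no isolated vertices.

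Next, for $D''_3 \in \On{5}$, the same template applies with $n=5$ and $12$ vertices. Not $5$-apex means $D''_3 - W$ is nonplanar for every $6$-subset $W$; equivalently, no $5$ vertices can destroy all Kuratowski minors. The natural way to see this is to note that deleting the two extra vertices $v_{10}, v_{11}$ recovers $D'_3$, which is not $3$-apex, and more generally to show that any $5$-subset leaves behind a copy (as a minor) of a non-$0$-apex graph; here I would lean on the structure of the complement graph to locate, after any five deletions, two vertex-disjoint Kuratowski minors, or one that is ``$5$-robust.'' Minor-minimality again is the finite child-by-child check: for each edge-deleted or edge-contracted child, produce an explicit size-$\le 5$ apex set.

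The main obstacle I anticipate is step (i), proving non-$n$-apex, because it is a statement quantified over all $n$-subsets rather than a single witness, and a naive case analysis over $\binom{12}{6}$ subsets is unpleasant. The efficient route is to prove a structural ``robustness'' statement: identify in $D''_3$ a configuration — most plausibly two vertex-disjoint subgraphs each contracting to $K_5$ or $K_{3,3}$, together with enough redundancy (parallel connecting paths) that no $5$ vertices meet every Kuratowski minor — and then the non-apex claims for both $D''_3$ and its subgraph $D'_3$ follow at once. I expect the authors to package exactly this via the complement picture in Figure~\ref{figD3}, where disjointness and redundancy of the relevant subgraphs can be read off; the remaining minor-minimality verifications are then routine but numerous, and I would present them as a short table of (child, apex set) pairs rather than in prose.
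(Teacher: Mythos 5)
There is a genuine gap: you never actually establish the non-apex halves of the lemma, and the structural shortcut you propose in their place cannot work as stated. Saying that ``$D'_3 - W$ is nonplanar for every $3$-subset $W$'' is ``a finite, tractable check'' defers exactly the content that needs proving, and your suggested uniform reason for $D''_3$ --- two vertex-disjoint Kuratowski minors plus some redundancy so that no $5$ vertices meet every Kuratowski minor --- is quantitatively hopeless: two disjoint nonplanar pieces only certify that the graph is not $1$-apex, and blocking all $5$-subsets by disjointness alone would require six pairwise disjoint Kuratowski subgraphs, which is impossible in a $12$-vertex graph. The actual robustness in the paper comes from a different mechanism: a symmetry-guided case analysis on which vertices are deleted, where each case is reduced to a graph already known not to be $k$-apex for suitable $k$ --- for instance $D'_3 - v_0 = K_{3^3}$ is not $2$-apex, contracting $v_0v_9$ and deleting $v_7$ leaves $K_{3^2,1^2}$ (not $2$-apex by the earlier MMIK results invoked in Proposition~\ref{prong2}), and for $D''_3$ the case where both $v_{10}$ and $v_{11}$ are deleted reduces to the already-proved statement about $D'_3$, with the remaining cases handled via $K_5$ cliques, $K_{3^2,1}$ and $K_{3^2,1^2}$ subgraphs. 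Without some such reduction (or an explicitly executed exhaustive check), the ``not $3$-apex'' and ``not $5$-apex'' claims remain unproven in your write-up.

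Your plan for minor-minimality does match the paper's: classify edges up to symmetry and exhibit explicit apex sets for each $G-e$ and $G/e$ in a table. One economy you miss, which the paper uses for $D''_3$, is that any edge not incident to $v_{10}$ or $v_{11}$ is already an edge of $D'_3$, so the three-vertex apex sets found for $D'_3-e$ and $D'_3/e$ extend to five-vertex apex sets for $D''_3-e$ and $D''_3/e$ by adjoining $\{v_{10},v_{11}\}$; only three edge types involving $v_{10},v_{11}$ then need fresh apex sets. Also note that for minor-minimality you need not worry about isolated-vertex deletions at all: it suffices that every $G-e$ and $G/e$ is $n$-apex, since every proper minor is a minor of one of these.
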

Since $D'_3$ is a child of $K_{3^2,1^3}$ and $D''_3$ is a child of $K_{3^2,1^5}$, the lemma is consistent with our conjecture.

\begin{proof}
We argue $D'_3$ is not $3$-apex by showing how deleting any
three vertices results in a nonplanar graph. 
Deleting $v_0$ yields $D'_3 - v_0 = K_{3^3}$, which is not $2$-apex. So we can assume that
$v_0$ is not among the three deleted vertices. On the other hand, 
if none of $\{v_7, v_8, v_9\}$ are deleted, then a $K_{3,3}$ subgraph 
remains after deleting three vertices from $\{v_1, v_2, v_3, v_4, v_5, v_6\}$.
Yet, if $\{v_7, v_8, v_9\}$ are the three deleted vertices, again a 
$K_{3,3}$ subgraph remains. So, we are left with the cases where exactly 
one or two vertices in $\{v_7, v_8, v_9\}$ is deleted. 

Suppose then that $v_9$ is not deleted. Since we are assuming $v_0$ is also 
not deleted, we can contract $v_0v_9$. Then, deleting $v_7$ leaves the
graph $K_{3^2,1^2}$, which is not $2$-apex. This completes the
argument that $D'_3$ is not $3$-apex.

To show that $D'_3$ is minor minimal not $3$-apex, 
we argue that for any $e \in E(D'_3)$, both deletion,
$D'_3-e$, and contraction, $D'_3/e$, of that edge gives a $3$-apex graph. Up to symmetry there are three 
types of edges: $v_0v_7$, $v_1v_4$, $v_1v_7$. In each case, we list three
vertices in an apex set.

\begin{table}[h]
   \begin{center}
      \begin{tabular}{c|c}
         Edge $e$   & Apex Sets  \\ \hline
         $v_0v_7$ & $D'_3-e$: $v_1, v_4, v_8$; $D'_3/e$:  $\overline{v_0}, v_1, v_4$ \\ \hline
         $v_1v_4$ & $D'_3-e$: $v_2, v_3, v_7$; $D'_3/e$:  $v_2, v_3, v_7$ \\ \hline
         $v_1v_7$ & $D'_3-e$: $v_4,v_5,v_8$; $D'_3/e$:  $v_2, v_3, v_8$ \\ \hline
      \end{tabular}
   \end{center}
\end{table}

For example, when $e = v_0v_7$, $D'_3-e$ becomes planar on deletion of
vertices $v_1$, $v_4$, and $v_8$. We use $\overline{v_0}$ to denote
the vertex of $D'_3/v_0v_7$ that comes from identifying $v_0$ and $v_7$.
This complete the proof that $D'_3 \in \On{3}$.

Our argument for $D''_3$ is similar and we make use of what was proved for 
the subgraph $D'_3$. We show that deleting five vertices from 
$D''_3$ leaves
a nonplanar graph. If both $v_{10}$ and $v_{11}$ are among the five, this is true 
as deleting those two gives $D'_3$, which is not $3$-apex as we just showed.
This leaves two cases: either exactly one or else neither of $v_{10}$ and $v_{11}$
is deleted.

If neither is deleted, note that those two together with one vertex each from
$\{v_1, v_2, v_3\}$, $\{v_4, v_5, v_6\}$, and $\{v_7, v_8, v_9\}$ induce 
a $K_5$ subgraph. So, we can assume that one of those triples 
is entirely deleted.
For example, say the five deleted vertices include $\{v_1, v_2, v_3\}$.
Then at most two of $\{v_7, v_8, v_9\}$ are deleted. Say $v_7$ is not. Contracting
$v_0v_7$ gives a subgraph of $K_{3^2, 1^2}$ which is not $2$-apex. 
Suppose instead that $\{v_7, v_8, v_9\}$ is included among the five deleted vertices.
If $v_0$ is also deleted, what remains is $K_{3^2,1^2}$, which not only fails to be $1$-apex 
(which is all that is required for our argument), but is in fact not $2$-apex. 
If $v_0$ is not among the deleted vertices, then, up to symmetry, there are only two ways
to choose the last two vertices to delete: either $\{v_1, v_2\}$ or $\{v_1,v_4\}$. In either case, a nonplanar graph results.

To complete the argument that $D''_3$ is not $5$-apex, suppose that exactly one
of $v_{10}$ and $v_{11}$ is deleted, say $v_{11}$. If we, in addition, delete $v_0$,
what remains is a $K_{3^3,1}$ graph. There are then three vertices left to delete, and if any pair are in the same part of $K_{3^3,1}$, after deleting that pair, we
have the not $1$-apex graph $K_{3^2,1}$ as a subgraph and a nonplanar graph
on deleting the fifth and final vertex. So, we can assume that the remaining 
three deleted vertices are taken one each from different parts of $K_{3^3,1}$.
For example, we could delete $v_1$, $v_4$, and $v_7$. This leaves a nonplanar
graph. 

This means we can assume $v_0$ is not among the five deleted vertices, still
under the assumption that $v_{11}$, but not $v_{10}$, is deleted. 
If $v_1$ and $v_2$ are deleted, the remaining graph has a 
$K_{3^2,1^2}$ subgraph and is not $2$-apex.
Using the symmetry of the graph, this means we can assume $v_7$ and $v_8$
are deleted instead. However, this again results in a graph that is not $2$-apex
by virtue of having a $K_{3^2,1^2}$ subgraph. This completes the
argument that $D''_3$ is not $5$--apex.

We next demonstrate that, for any edge $e$, both $D''_3-e$ and $D''_3/e$ are
$5$-apex. If $e$ is not incident on $v_{10}$ or $v_{11}$, we may delete those
two vertices and recognize $e$ as an edge in $D'_3$. As we have already 
found three vertex apex sets for $D'_3-e$ and $D'_3/e$, taking the union
with $\{v_{10},v_{11}\}$ gives a five vertex apex set for
$D''_3-e$ or $D''_3/e$.

So, we may assume that $e$ is incident on $v_{10}$, $v_{11}$ or both. 
Up to symmetry,  there
are three such edges, and, in each case, we provide a five vertex apex set. 

\begin{table}[h]
   \begin{center}
      \begin{tabular}{c|c}
         Edge $e$   & Apex Sets  \\ \hline
         $v_1v_{10}$ & $D''_3-e$: $v_4, v_5, v_6, v_7, v_8$; 
         $D''_3/e$:  $v_2, v_3, v_4, v_7, v_8$ \\ \hline
         $v_7v_{10}$ & $D''_3-e$: $v_1, v_4, v_8, v_9, v_{11}$; 
         $D''_3/e$:  $v_1, v_2, v_3, v_4, v_8$ \\ \hline
         $v_{10}v_{11}$ & $D''_3-e$: $v_1,v_4,v_7,v_8,v_9$; 
         $D''_3/e$:  $v_1,v_2, v_3, v_4,v_7$ \\ \hline
      \end{tabular}
   \end{center}
\end{table}

This completes the proof that $D''_3 \in \On{5}$.
\end{proof}

\begin{proof} (of Theorem~\ref{thm3kids})
We begin with $K_{n+5}$, which is not $n$-apex since deletion of any $n$ vertices gives the nonplanar graph $K_5$. On the other hand, given $e \in E(K_{n+5})$, both
deletion and contraction of $e$ result in an $n$-apex graph as can be seen by 
deleting any $n$ vertices not on the edge $e$. Therefore,
every proper minor of $K_{n+5}$ is $n$-apex and $K_{n+5} \in \On{n}$.

Let $C_1$ denote the child of $K_{n+5}$. This graph has a unique vertex, $v_0$, of degree three with neighbors $v_1$, $v_2$, and $v_3$. 
Denote the remaining vertices $v_4, v_5, v_6, \ldots, v_{n+5}$.
Generally, deleting $n$ vertices results in a graph with $K_{3,3}$ minor. The exception is when exactly
one of $v_1$, $v_2$, and $v_3$ is deleted and $v_0$ is not. In this case, the result has a $K_5$ minor. Thus, $C_1$ is not $n$-apex.

Up to symmetry, there are three types of edges in $C_1$, represented by $v_0 v_1$, $v_1 v_4$, and $v_4 v_5$.
For each choice of edge, $e$, we argue that both
deletion, $C_1 - e$, and contraction, $C_1/e$, gives an $n$-apex graph.

For $C_1/e$, we observe that, no matter which edge is contracted, by appropriate deletion of $n$ vertices, 
we can ensure that $v_0$ (or the vertex formed by identifying 
$v_0$ and $v_1$ in case $e = v_0v_1$) is an isolated vertex in the resulting
graph on $5$ vertices, which is, therefore, nonplanar.
If $e = v_0 v_1$ or $v_1v_4$, $C_1-e$ is planar after deleting all vertices but $v_0$ through $v_6$, while $C_1 - v_4v_5$ becomes planar after 
deletion of at least one of $v_1, v_2, v_3$ and none of $v_0, v_4, v_5$.

Therefore $C_1 \in \On{n}$ and we turn to its children. To form a triangle in 
$C_1$, we use at most one vertex from $v_1$, $v_2$, and $v_3$. This means
a triangle cannot include $v_0$ and, up to symmetry, there are two types
of triangles: $v_1v_4v_5$, resulting in the child $C_2$, or $v_4v_5v_6$, which
gives a child we'll call $C_3$. For these new graphs, we'll use the same vertex labels as in $C_1$ with the addition of a new degree three vertex $w_0$.

Let's see why $C_2 \in \On{n}$. First, notice that there is a graph automorphism by 
interchanging $v_0$ and $w_0$ and the pair $\{v_2,v_3\}$ with $\{v_4,v_5\}$.
Deleting $n-2$ of the $n$ vertices in $\{v_6, v_7, \ldots v_{n+5} \}$
results in a graph in the $K_7$ family, that is not $2$-apex 
by Proposition~2. The only way $C_2$ could remain planar 
after deletion of $n$ vertices is if at least three of those vertices remain.
However the remaining vertices from that set, together with 
up to one each of $\{v_2,v_3\}$ and $\{v_4, v_5\}$, form a clique. 
A clique on five or more vertices contains $K_5$ and is nonplanar.
Thus, we surely have a nonplanar subgraph by deletion of $n$ vertices 
except, possibly, in two cases (up to symmetry): 
1) $\{v_6, v_7, v_8, v_9\}$ remain after deleting the $n$ vertices, or
2) $\{v_6, v_7, v_8\}$ and vertices from at most one of $\{v_2, v_3\}$ and
$\{v_4, v_5 \}$, say the first, remain.

In the first case, we see that $\{v_2, v_3, v_4, v_5, v_{10}, v_{11}, \ldots v_{n+5}\}$ are the $n$ deleted vertices and what remains is the graph induced
on $v_0$, $v_1$, $v_6$, $v_7$, $v_8$, $v_9$, and $w_0$, which is nonplanar.

In the second case, we can assume $v_2$ remains as otherwise we must have 
deleted the $n+1$ vertices $\{v_2, v_3, v_4, v_5, v_9, v_{10}, \ldots, v_{n+5}\}$.
Since $v_2$ remains, we
see that $\{v_4, v_5, v_9, v_{10}, \ldots, v_{n+5}\}$ 
are $n-1$ of the deleted vertices. The graph $H$ that remains after deleting these 
is the induced graph on the eight vertices 
$v_0$, $v_1$, $v_2$, $v_3$, $v_6$, $v_7$, $v_8$, and $w_0$. 
It is straightforward to verify that $H$ is not $1$-apex. 
So, no matter which vertex is chosen as the $n$th for deletion, the result 
will be nonplanar in this case. This completes the argument that $C_2$ is not $n$-apex.

Next we show that every proper minor is $n$-apex by establishing this for
$C_2 - e$ and $C_2/e$ whatever edge $e$ is chosen. Up to symmetry, 
there are six choices for $e$: $v_0v_1$, $v_0v_2$, $v_1v_6$, $v_2v_4$,
$v_2v_6$,  and $v_6v_7$. Recall that deleting $n-2$ vertices from
$\{ v_6, v_7, \ldots, v_{n+5} \}$ converts $C_2$ to a graph $C'_2$ in the $K_7$ family.
Each choice of $e$ corresponds to an edge in $C'_2$ and we've already
established, in Proposition~2, that $C'_2 - e$ and $C'_2/e$ are $2$-apex. 
This means the corresponding minors of $C_2$ are $n$-apex. This completes 
the argument that $C_2 \in \On{n}$.

Let's see why $C_3$ is not $n$-apex. Deleting $n-2$ vertices in $V_{7+} = \{ v_7, v_8, \ldots, v_{n+5} \}$ results in a graph in the $K_7$ family that is not $2$-apex by Proposition~2. 
So, to achieve a planar graph after deleting $n$ vertices would require at least 
two vertices in $V_{7+}$ to remain. On the other hand the vertices in $V_{7+}$
induce a clique so, to avoid the nonplanar $K_5$, at most four can remain. 
Up to symmetry, this gives three cases:
1) $\{v_7, v_8, v_9, v_{10}\}$, 2) $\{v_7, v_8, v_9\}$, or 3) $\{v_7, v_8\}$ 
are the elements that remain after deleting $n$--vertices. To complete the 
argument that $C_3$ is not $n$-apex, we'll show that none of these lead 
to a planar graph.

Any vertex from $\{v_1,v_2,v_3\}$ or $\{v_4, v_5, v_6\}$ will also induce a clique
with vertices from $V_{7+}$. In the first case, if we are to avoid a nonplanar graph, 
we must delete
the $n+1$ vertices $\{ v_1, v_2, \ldots v_6, v_{11}, v_{12}, \ldots, v_{n+5} \}$, 
so we cannot use this case to show $C_3$ is $n$-apex.
In the second case, we could have vertices from at most one of $\{v_1, v_2, v_3 \}$ and $\{v_4, v_5, v_6\}$, say the first, in addition to $\{v_7, v_8, v_9\}$. This means 
we have $\{v_4, v_5, v_6, v_{10}, v_{11}, \ldots, v_{n+5} \}$ as $n-1$ of the deleted vertices. It's straightforward to verify that the graph induced on the other vertices, 
$v_0, v_1, v_2, v_3, v_7, v_8, v_9, w_0$ is not 1-apex. For the final case,
$\{ v_9, v_{10},  \ldots, v_{n+5} \}$ are $n-3$ of the deleted vertices. The graph 
induced on the other vertices, $v_0, v_1, \ldots v_8, w_0$ is not $3$-apex. Thus, in any case, we deduce that $C_3$ is not $n$-apex.

Up to symmetry, there are four types of edges in $C_3$: 
$v_0v_1$, $v_1v_4$, $v_1v_7$, and $v_7v_8$. By deleting $n-2$ vertices 
in $\{v_7, v_8, \ldots, v_{n+5} \}$ we arrive at the graph $C'_3$ in the $K_7$ family.
Aside from $v_7v_8$, each choice of edge $e$ is represented in $C'_3$. In Proposition~2 we argued that $C'_3-e$ and $C'_3/e$ are $2$-apex, which means 
$C_3-e$ and $C_3/e$ are $n$-apex. When $e=v_7v_8$, we can think of deleting 
the $n-3$ vertices $\{ v_9, v_{10}, \ldots, v_{n+5} \}$ 
to give the graph $H$ induced on the vertices $v_0, v_1, \ldots v_8, w_0$, a graph
in the $K_8$ family. Now, deleting vertices $v_1, v_2, v_4$ 
shows that $H-v_7v_8$ is $3$-apex while deleting vertices $v_1, v_2, v_3$ demonstrates that $H/v_7v_8$ is also $3$-apex. It follows that deleting or contracting $v_7v_8$ in $C_3$ results in an $n$-apex graph. Since any 
$C_3-e$ or $C_3/e$ is $n$-apex, it follows that any proper minor is $n$-apex. 
This completes the argument that $C_3 \in \On{n}$.

The graph $K_{3^2,1^n}$ is not $n$-apex as deleting any $n$ vertices 
leaves a graph with $K_{3,3}$ minor.
Let $v_1, v_2, v_3$ and $v_4, v_5, v_6$ denote the vertices
in the two parts with three vertices and $v_7, v_8, \ldots v_{n+6}$ the vertices
in the remaining $n$ parts. Up to symmetry, there are three types of 
edges $e$: $v_1v_4$, $v_1v_7$, and $v_7v_8$.  Removing $n$ vertices from
$K_{3^2,1^n}/e$ gives a graph on five vertices which is planar as long as it is 
not $K_5$. For example, ensuring that $v_5$ and $v_6$ are among the five
vertices that remain shows that $K_{3^2,1^n}/e$ is $n$-apex. On the other hand, 
deleting $n$ vertices from $K_{3^2, 1^n} - e$ gives a graph on six vertices. 
For each choice of $e$, we list six vertices that induce a planar subgraph:
for $e = v_1v_4$, use $v_1, v_2, \ldots, v_6$; 
if $e = v_1v_7$, take $v_1, v_2, v_3, v_4, v_5, v_7$; and when 
$e = v_7v_8$, then $v_1, v_2, v_4, v_5, v_7, v_8$ will work. This shows that any
proper minor of $K_{3^2,1^n}$ is $n$-apex and completes the proof that it is in $\On{n}$.

Up to symmetry, there are three types of triangles in $K_{3^2, 1^n}$. We'll 
call the three children $D_1$, $D_2$, and $D_3$, corresponding to $\ty$ moves on
on $v_1v_4v_7$, $v_1v_7v_8$, and 
$v_7v_8v_9$ respectively. We'll argue each child is in $\On{n}$.
In $D_i$, we preserve the vertex labels $v_1, v_2, \ldots, v_{n+6}$ 
from $K_{3^2,1^n}$ and call the new vertex of degree three $v_0$.

In $D_1$, deleting $n-2$ vertices from 
$V_{8+} = \{v_8, v_9, \ldots, v_{n+6}\}$ gives a graph, $D'_1$,
in the $K_{3^1, 1^2}$ family that is not $2$-apex by Proposition~2. 
So, at least two vertices from $V_{8+}$ must remain. On the other hand,
vertices in $V_{8+}$ induce a clique so at most four can remain and we again 
have three cases: 1) $\{ v_8, v_9, v_{10}, v_{11} \}$, 2) $\{v_8, v_9, v_{10}\}$, 
and 3) $\{v_8, v_9\}$ are the vertices of $V_{8+}$ that remain.

In addition, up to one vertex each from $\{v_2, v_3\}$ and $\{v_5, v_6\}$
can be added to those remaining from $V_{8+}$ to make a clique. So, in the first
case, we can assume that $n-1$ vertices were deleted leaving the graph 
induced on $v_0, v_1, v_4, v_7,  v_8, v_9, v_{10}, v_{11}$, which is not 
$1$-apex. In the second case, let's say that vertices from $\{v_2, v_3\}$ may remain.
Then, after deleting $n-2$ vertices, we have the graph induced on
$v_0, v_1, v_2, v_3, v_4, v_7, v_8, v_9, v_{10}$, which is not $2$-apex.
Finally, in the third case, after deleting $n-3$ vertices, 
we have the graph induced on $v_0, v_1, \ldots , v_9$, which is not $3$-apex. 
This shows that $D_1$ is not $n$-apex.

Up to symmetry there are nine types of edges in $D_1$:
$v_0v_1$, $v_0v_7$, $v_1v_5$, $v_1v_8$, $v_2v_5$, $v_2v_7$, 
$v_2v_8, v_7v_8$, and $v_8v_9$.
Except for the last one, $v_8v_9$, each edge $e$ is represented in 
$D'_1$, a graph in the $K_{3^2, 1^2}$ family
obtained by deleting $n-2$ vertices from $V_{8+}$. In Proposition~2, we argued
that $D'_1-e$ and $D'_1/e$ are $2$-apex and it follows that $D_1-e$ and $D_1/e$
are $n$-apex when $e \neq v_8v_9$. If we delete $n-3$ vertices in $V_{8+}$, 
leaving $\{v_8,v_9\}$, we obtain the graph $H$ induced on  $v_0, v_1, \ldots , v_9$, which includes the edge $v_7v_8$. Deleting $v_1, v_4, v_7$ shows that $H - v_8v_9$ is $3$-apex and $D_1 - v_8v_9$ is $n$-apex. For $H/v_8v_9$, deletion of $v_1, v_2,
v_3$ shows that it it $3$-apex and $D_1/v_8v_9$ is $n$-apex. We've shown that for any
edge $e$, $D_1-e$ and $D_1/e$ are both $n$-apex. It follows that any proper
minor is $n$-apex, which completes the argument that $D_1 \in \On{n}$.

The verification for $D_2$ is similar 
and we'll omit some details. Deleting the $n-2$ vertices in $V_{9+}$ results
in the graph $D'_2$ in the $K_{3^2,1^2}$ family, which is not $2$-apex. 
This leaves the cases where between four and one vertex of $V_{9+}$ remain. 
Since one vertex each from $\{v_2, v_3\}$, $\{v_4, v_5, v_6\}$, and $\{v_7,v_8\}$ can
join with vertices of $V_{9+}$ to form a clique, some or all of these must be deleted
to avoid $K_5$, a clique on five vertices, surviving. For example, in the case that
four vertices of $V_{9+}$ remain, then all seven vertices $v_2, v_3 \ldots v_8$ 
must be deleted in addition to the $n-6$ taken from $V_{9+}$, which is a total of
$n+1$ vertices. In the case that only one vertex of $V_{9+}$ remains, we have removed $n-3$ vertices from $V_{9+}$ and argue that the graph that remains, 
on ten vertices, is not $3$-apex. The remaining cases can be approached in the 
same way; remove either $n-4$ or $n-5$ vertices from $V_{9+}$ and argue that 
the remaining graphs are not $4$-apex nor $5$-apex. This shows $D_2$ is not
$n$-apex.

There are several edge types in $D_2$, but with the exception of those
incident on vertices in $V_{9+}$, all are represented in $D'_2$. For edges 
$e$ incident on one or two vertices of $V_{9+}$, we argue directly that the 
graphs that remain after deleting $n-3$ or $n-4$ vertices
of $V_{9+}$ and either contracting or deleting $e$ are 
$3$-apex or $4$-apex, respectively. This implies
that the corresponding graph $D_2 -e$ or $D_2/e$ is $n$-apex, as required.

Finally, we turn to $D_3$. As with earlier cases, we will leverage a similar graph $D'_3$, this time in the $K_{3^2,1^3}$ family,
In Lemma~\ref{lemD3} we proved $D'_3 \in \On{3}$. Deleting the $n-3$ vertices in 
$V_{10+}$ gives $D'_3$, so to prove $D_3$ is not $n$-apex it remains to take the cases
where between one and four vertices of $V_{10+}$ remain.

Taking a vertex each from $\{v_1, v_2, v_3\}$, $\{v_4, v_5, v_6\}$,
and $\{v_7, v_8, v_9\}$ gives a clique with any vertices that remain from $V_{10+}$.
In case four vertices from $V_{10+}$ remain, this means deletion of a further nine vertices
in addition to the $n-7$ from $V_{10+}$ to avoid a $K_5$ minor and so cannot be used to 
show $D_3$ is $n$-apex. Suppose the three vertices $\{v_{10}, v_{11}, v_{12}\}$ remain
from $V_{10+}$. To avoid a $K_5$ minor, we must delete two of the triples and there are a
couple of ways to do this. If $\{v_1, v_2, \ldots, v_6\}$ are deleted, then $n$ vertices are
gone and we observe that the resulting graph induced on $v_0, v_7, v_8, \ldots, v_{12}$ is nonplanar.
On the other hand, if $\{v_4, v_5, \ldots, v_9\}$ are deleted, the result, 
induced on $v_0, v_1, v_2, v_3, v_{10}, v_{11}, v_{12}$ is again nonplanar.
So, we can assume at most two vertices from $V_{10+}$ remain.
If two, we've deleted $n-5$ vertices to leave the graph $D''_3 \in \On{5}$ (see Lemma~\ref{lemD3})
and if only one, what's left is a graph in the $K_{3^2,1^6}$ family that can be shown to 
be not $6$-apex. This completes the argument that $D_3$ is not $n$-apex.

Finally, any $e \in E(D_3)$ is realized as an edge of $D''_3$. In Lemma~\ref{lemD3}, we prove that 
$D''_3-e$ and $D''_3/e$ are $5$-apex and it follows that $D_3$ and $D''_3$ are $n$-apex,
completing the proof that $D_3 \in \On{n}$.
\end{proof}

\section{Results for $\On{2}$ and $\On{3}$}

In this section we prove Conjecture~1 when $n=3$. However, we begin
with classifications of the graphs in $\On{2}$
through order nine and size 23.
Mattman~\cite{M} established that the elements of $\On{2}$ have size
21 at least and Barsotti and Mattman~\cite{BM} showed that the $K_7$ family is
precisely the set of graphs of size 21. The 58 $K_{3^2,1^2}$ graphs are
of size 22 in $\On{2}$ and, based on a computer search, we have found
exactly two other graphs with 22 edges. Both are $4$--regular on 11 vertices
and we list their edges below.

$$
\{\{1,6\},\{1,9\},\{1,10\},\{1,11\},\{2,6\},\{2,9\},\{2,10\},\{2,11\},
$$ 
$$
\{3,7\},\{3,8\},\{3,9\},\{3,10\},\{4,7\},\{4,8\},\{4,9\},
$$
$$
\{4,11\},\{5,7\},\{5,8\},\{5,10\},\{5,11\},\{6,7\},\{6,8\}\}
$$

$$
\{\{1,5\},\{1,6\},\{1,7\},\{1,8\},\{2,6\},\{2,7\},\{2,8\},\{2,9\},$$
$$
\{3,7\},\{3,8\},\{3,9\},\{3,10\},\{4,8\},\{4,9\},\{4,10\},
$$
$$
\{4,11\},\{5,9\},\{5,10\},\{5,11\},\{6,10\},\{6,11\},\{7,11\}\}
$$

We were surprised to discover that there are {\bf no} graphs in $\On{2}$ of 
size 23. This is similar to $\On{1}$ where the Petersen family 
graphs of size 15 are the only obstructions with fewer than 18 edges (see
\cite{BM}). On the other hand, we do have examples of graphs in $\On{2}$ for
each size between 24 and 30, inclusive. 

In terms of order, we found a total of 12 graphs in $\On{2}$ with nine or fewer
vertices. There are five each in the $K_{7}$ and $K_{3^2,1^2}$ families and 
two of size 26 and 27 whose edge lists we give below.
$$\{\{1, 4\}, \{1, 5\}, \{1, 7\}, \{1, 8\}, \{1, 9\}, \{2, 5\}, \{2, 6\}, \{2, 7\}, \{2, 
  8\},$$
 $$   \{2, 9\}, \{3, 5\}, \{3, 6\},
  \{3, 7\}, \{3, 8\}, \{3, 9\}, \{4, 6\}, \{4, 
  7\}, \{4, 8\}, $$
$$  \{4, 9\}, \{5, 6\}, \{5, 8\}, \{5, 9\}, \{6, 8\}, \{6, 9\}, \{7, 
  8\}, \{7, 9\}\}$$
  
$$ \{\{1, 4\}, \{1, 5\}, \{1, 6\}, \{1, 7\}, \{1, 8\}, \{1, 9\}, \{2, 4\}, \{2, 5\}, \{2, 
  6\}, $$
$$  \{2, 7\}, \{2, 8\}, \{2, 9\}, \{3, 4\}, \{3, 5\}, \{3, 6\}, \{3, 7\}, \{3, 
  8\}, \{3, 9\},$$
$$ \{4, 6\}, \{4, 7\}, \{4, 8\}, \{5, 7\}, \{5, 8\}, \{5, 9\}, \{6, 
  8\}, \{6, 9\}, \{7, 9\}\}$$
In total, this gives 82 graphs in $\On{2}$: 20 in the $K_7$ family,
58 in $K_{3^2,1^2}$'s, and four more with edges listed above.

\begin{figure}[ht]
 \centering
\includegraphics[scale=0.5]{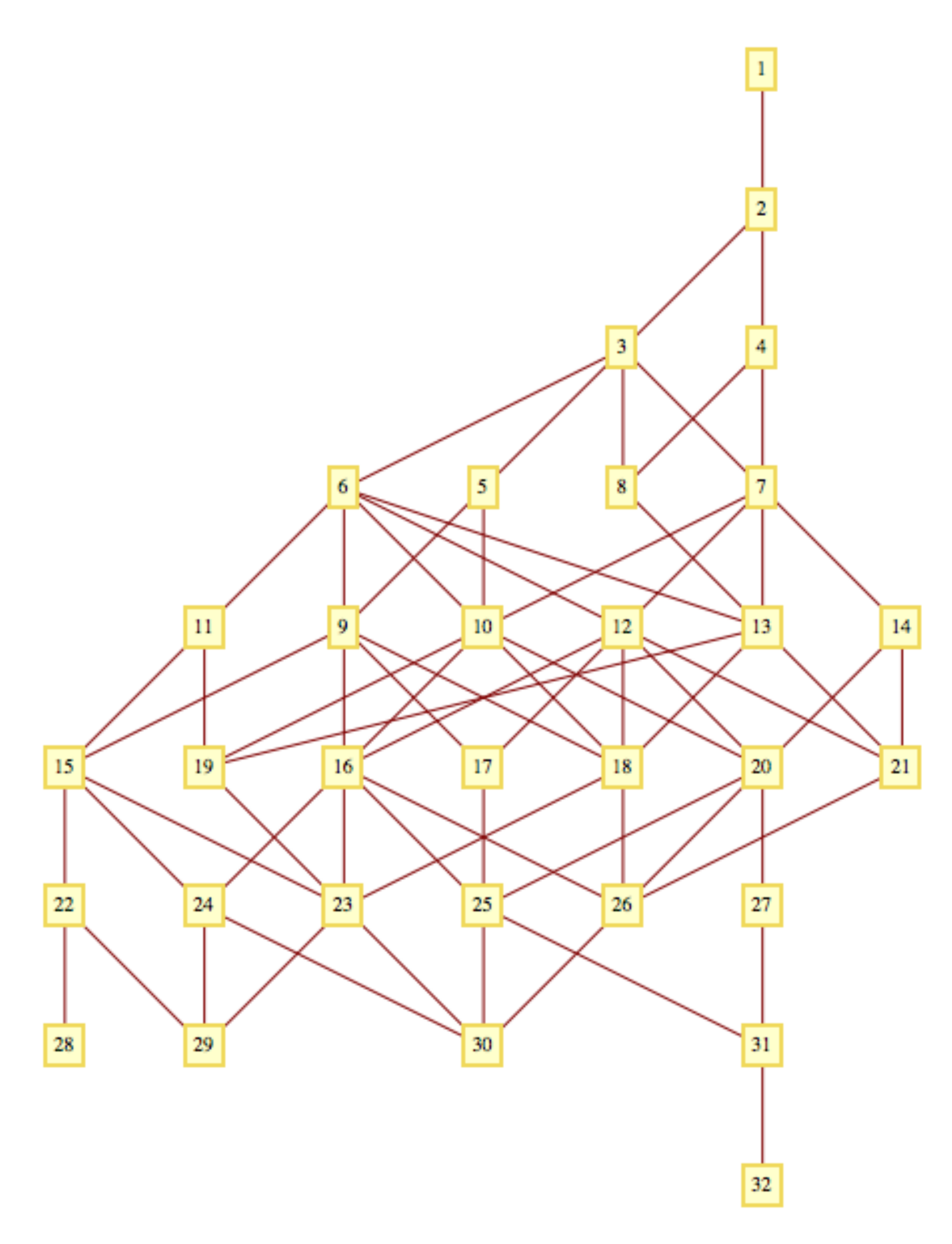}
 \caption{The $K_8$ family.
 \label{figK8fam}%
 }
\end{figure}

Next, we show that Conjecture~1 holds for $n = 3$. For this we rely on
a computer verification that the 569 graphs in the $K_{3^2, 1^3}$ family are
MMN3A. Then the following theorem completes the proof. Together with
the $K_{3^2,1^3}$ family, the 32 graphs in $K_8$'s give the bound 
$\On{3} \geq 601$ mentioned in the introduction.
\begin{theorem} The $K_8$ family is in $\On{3}$.
\end{theorem}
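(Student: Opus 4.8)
The plan is to imitate the "by hand" argument used for the $K_7$ family in Proposition~\ref{prong2} and for the children of $K_{n+5}$ and $K_{3^2,1^n}$ in the proof of Theorem~\ref{thm3kids}, now working one level deeper in the $\ty$--tree rooted at $K_8$. We already know from Theorem~\ref{thm3kids} that $K_8$, its unique child $C_1$, and the child and two grandchildren ($C_2$, $C_3$ in that proof) lie in $\On{3}$. So the first step is to lay out the full $\ty$--family of $K_8$ as in Figure~\ref{figK8fam}, record the 32 graphs together with the $\ty$/$\yt$ edges connecting them, and identify which of the 32 are already handled. The remaining graphs must then be checked individually, and for each we must show (a) it is not $3$-apex, and (b) every single-edge deletion and single-edge contraction yields a $3$-apex graph.

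For part (a), the workhorse is the same case analysis seen throughout Section~1: a graph $G$ in the family fails to be $3$-apex once one shows that deleting any three vertices leaves a nonplanar graph. One exploits (i) automorphisms of $G$ to cut down the number of deletion-triples to consider, (ii) the fact that a clique on five or more vertices is nonplanar, so large cliques force many deletions, and (iii) recognition of known not-$k$-apex subgraphs or minors after a few deletions/contractions — in particular members of the $K_7$ family (not $2$-apex, by Proposition~\ref{prong2}), $K_{3^2,1^2}$ (not $2$-apex), $K_{3,3}$ and $K_5$ (nonplanar), and smaller graphs in the $K_8$ family itself. Since every graph in the $K_8$ family is obtained from $K_8$ by $\ty$ moves, each retains a degree-three vertex $v_0$ whose deletion returns one to the $K_7$ family (or a descendant); contracting one of $v_0$'s edges and then deleting gives further reductions. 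For part (b), for each of the (finitely many, up to symmetry) edge types in each graph, we exhibit an explicit three-vertex apex set for $G-e$ and for $G/e$, presented in a small table exactly as in the proof of Lemma~\ref{lemD3}; frequently $e$ survives into a member of the $K_7$ family after deleting $v_0$-type vertices, and then the apex sets already recorded in Proposition~\ref{prong2} transfer directly, leaving only the "new" edges (those created by the last $\ty$ move, incident to a fresh degree-three vertex) to be treated by hand.

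The main obstacle is bookkeeping rather than conceptual depth: there are 32 graphs, each with several edge-orbits and several vertex-triples to rule out, so the real work is organizing the case analysis so that symmetry and the inheritance from $K_7$-family and $K_{3^2,1^2}$-family results do as much as possible, and then verifying the residual "new-edge" cases and the residual not-$3$-apex cases one at a time. A secondary subtlety is that $\ty$/$\yt$ moves do \emph{not} preserve MMN$n$A in general (as the examples $G_3, G_5$ in the introduction show), so one cannot simply propagate membership in $\On{3}$ along the tree; each of the 32 graphs genuinely must be checked, though Lemma~\ref{lemnapex} and the planarity-preservation of $\yt$ moves streamline the "$3$-apex minor" half of the argument. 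The expected payoff is a purely combinatorial verification, with the bulk of the routine checks (especially the explicit apex sets) deferred to tables or to a remark that they are straightforward, mirroring the style already set in Section~1.
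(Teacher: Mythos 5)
Your overall skeleton matches the paper's: seed the family with Theorem~\ref{thm3kids} (Cousins 1--4), then for each remaining cousin verify (a) it is not $3$-apex and (b) every $C_n-e$ and $C_n/e$ is $3$-apex, with part (b) handled exactly as you say, by tables of explicit three-vertex apex sets for each edge up to symmetry. The divergence, and the gap, is in part (a). The paper does \emph{not} redo a triple-deletion case analysis for the 28 remaining cousins. Instead it uses Lemma~\ref{lemnapex} in contrapositive: each cousin $C_n$, $5\le n\le 32$, admits a $\yt$ move at a degree-three vertex (vertex $9$ in their labelling) producing its parent in the family, already known to be not $3$-apex; so if $C_n$ were $3$-apex, the apex set would have to contain vertex $9$, and it therefore suffices to show $C_n-9$ is not $2$-apex. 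That in turn is done by deleting or contracting a handful of edges of $C_n-9$ to reach a cousin of the $K_{3^2,1^2}$ family, which is not $2$-apex by Proposition~\ref{prong2}. This is the organizing idea that makes the ``not $3$-apex'' half tractable, and your proposal misses it --- indeed you assign Lemma~\ref{lemnapex} to the minimality half, where it plays no role.

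Moreover, the structural shortcut you do propose for part (a) is false as stated: it is not true that each family member has a degree-three vertex whose deletion ``returns one to the $K_7$ family.'' If $G'$ is obtained from $G$ by a $\ty$ move on triangle $abc$ creating $v$, then $G'-v$ is $G$ with the three triangle edges removed; for example, the child of $K_8$ minus its degree-three vertex is $K_8$ less a triangle, which is not a $K_7$-family graph. (This is precisely why the paper must pass to further edge deletions/contractions of $C_n-9$ and lands in the $K_{3^2,1^2}$ family rather than $K_7$'s.) Without either the correct Lemma~\ref{lemnapex} device or some replacement, your part (a) reverts to an unstructured check of all three-vertex deletions in each of 28 graphs, which your proposal defers rather than carries out; as written, the step you lean on to make that check feasible would fail.
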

\begin{proof}
The $K_8$ family consists of 32 graphs, illustrated schematically in 
Figure~\ref{figK8fam}. In the appendix we give edge lists for these graphs.
Graphs at the same height are of the same order and edges join graphs that are 
related by a $\ty$ move. All graphs in this family have 28 edges.
The numbering simply reflects the order in which these graphs were 
generated by our computer program and we refer to the graphs in the family as
Cousin 1, $K_8$, through Cousin 32, a graph of order 16.

Theorem~\ref{thm3kids} shows that Cousin 1, $K_8$, and its child, 
Cousin 2, and grandchildren, Cousins 3 and 4, are all MMN3A.
We apply Lemma~4 to see that the remaining 
descendants of $K_8$ are not $3$-apex. 

For example, let $C_5$ denote Cousin 5, which is a child of Cousin 3, $C_3$.
In other words a $\yt$ move at some vertex $v$ in $C_5$ gives $C_3$. 
If $C_5$ were $3$-apex and $v$ not an apex, then Lemma~4 would
imply $C_3$ is $3$-apex, a contradiction. So, to show that $C_5$ is
not $3$-apex, it is enough to argue that $v$ cannot be an apex. For this, 
it suffices to show that $C_5 - v$ is not $2$-apex. Indeed, $C_3$ can be
obtained by a $\yt$ move at vertex $v = 9$. Deleting edges $\{4,6\}$, $\{4,8\}$, and
$\{6,8\}$ from $C_5-9$ results in a graph 
(Cousin 4 by the numbering in~\cite{GMN}) in the $K_{3^2,1^2}$ family.
This shows that $C_5-9$ is not $2$-apex, as required.

In the appendix, we list, for each cousin $C_n$ ($5 \leq n \leq 32$)
in the $K_8$ family, 
a vertex $v$ such that a $\yt$ move at $v$ gives another graph in the family, and 
edges of $C_n$ such that deleting or contracting those edges from 
$C_n-v$ gives a graph in the $K_{3^2,1^2}$ family. 
In fact, we can take $v=9$ in all cases.
This shows no graphs in the family are $3$-apex.

Further, for $5 \leq n \leq 32$, we show that each proper minor of $C_n$ is $3$-apex. For each edge $e$ (up to symmetry), we show that both $C_n-e$ and 
$C_n/e$ (edge deletion and contraction, respectively) are $3$-apex by listing
three vertices whose deletion gives a planar subgraph. This completes the proof.
\end{proof}

\appendix

\section{Edge lists of $K_8$ family graphs} 
In Tables~\ref{tblK81} and \ref{tblK82}
we list the edges of the 32 graphs in the $K_8$ family. We number the graphs as Cousin 1 ($K_8$) through
Cousin 32 (a graph of order 16).

\begin{table}[hp]
   \begin{center}
      \begin{tabular}{c|c}
\label{tblK81}%

      $n$ & Edges of $C_n$ \\ \hline
1 &       
$$\{\{1, 2\}, \{1, 3\}, \{1, 4\}, \{1, 5\}, \{1, 6\}, \{1, 7\}, \{1, 8\},  \{2, 3\}, \{2, 4\}, \{2, 5\},$$ \\ 
& $$\{2, 6\}, \{2, 7\}, \{2, 8\}, \{3, 4\}, \{3, 5\}, \{3, 6\},    \{3, 7\}, \{3, 8\}, \{4, 5\},$$ \\
& $$\{4, 6\}, \{4, 7\}, \{4, 8\}, \{5, 6\}, \{5, 7\}, \{5, 8\}, \{6, 7\}, \{6, 8\}, \{7, 8\}\}$$ \\ \hline
2 &
$$\{\{1, 4\}, \{1, 5\}, \{1, 6\}, \{1, 7\}, \{1, 8\}, \{1, 9\}, \{2, 4\}, \{2, 5\}, \{2, 6\}, \{2, 7\},$$ \\
&$$\{2, 8\}, \{2, 9\}, \{3, 4\}, \{3, 5\}, \{3, 6\},    \{3, 7\}, \{3, 8\}, \{3, 9\}, \{4, 5\}, $$ \\
&$$\{4, 6\}, \{4, 7\}, \{4, 8\}, \{5, 6\}, \{5, 7\},   \{5, 8\}, \{6, 7\}, \{6, 8\}, \{7, 8\}\}$$ \\ \hline
3 &
$$\{\{1, 6\}, \{1, 7\}, \{1, 8\}, \{1, 9\}, \{1, 10\}, \{2, 4\}, \{2, 5\}, \{2, 6\}, \{2, 7\}, \{2, 8\},$$ \\
&$$ \{2, 9\}, \{3, 4\}, \{3, 5\}, \{3, 6\},    \{3, 7\}, \{3, 8\}, \{3, 9\}, \{4, 6\}, \{4, 7\}, $$ \\
&$$\{4, 8\}, \{4, 10\}, \{5, 6\}, \{5, 7\},    \{5, 8\}, \{5, 10\}, \{6, 7\}, \{6, 8\}, \{7, 8\}\}$$ \\ \hline
4 &
$$\{\{1, 4\}, \{1, 5\}, \{1, 6\}, \{1, 7\}, \{1, 8\}, \{1, 9\}, \{2, 4\}, \{2, 5\}, \{2, 6\}, \{2, 7\},$$ \\
&$$ \{2, 8\}, \{2, 9\}, \{3, 4\},    \{3, 5\}, \{3, 6\}, \{3, 7\}, \{3, 8\}, \{3, 9\}, \{4, 7\},$$ \\ 
&$$ \{4, 8\}, \{4, 10\}, \{5, 7\},    \{5, 8\}, \{5, 10\}, \{6, 7\}, \{6, 8\}, \{6, 10\}, \{7, 8\}\}$$ \\ \hline
5 &
$$\{\{1, 8\}, \{1, 9\}, \{1, 10\}, \{1, 11\}, \{2, 4\}, \{2, 5\}, \{2, 6\}, \{2, 7\}, \{2, 8\},    \{2, 9\},$$ \\
& $$ \{3, 4\}, \{3, 5\}, \{3, 6\}, \{3, 7\}, \{3, 8\}, \{3, 9\}, \{4, 6\}, \{4, 7\},    \{4, 8\}, $$ \\
& $$\{4, 10\}, \{5, 6\}, \{5, 7\}, \{5, 8\}, \{5, 10\}, \{6, 8\}, \{6, 11\}, \{7, 8\}, \{7, 11\}\}$$ \\ \hline
6 & 
$$\{\{1, 6\}, \{1, 7\}, \{1, 8\}, \{1, 9\}, \{1, 10\}, \{2, 5\}, \{2, 7\},   \{2, 8\}, \{2, 9\}, \{2, 11\},$$ \\
& $$ \{3, 4\}, \{3, 5\}, \{3, 6\}, \{3, 7\}, \{3, 8\}, \{3, 9\},    \{4, 7\}, \{4, 8\}, \{4, 10\},$$ \\
& $$ \{4, 11\}, \{5, 6\}, \{5, 7\}, \{5, 8\}, \{5, 10\}, \{6, 7\},    \{6, 8\}, \{6, 11\}, \{7, 8\}\}$$ \\ \hline
7 &
$$\{\{1, 6\}, \{1, 7\}, \{1, 8\}, \{1, 9\}, \{1, 10\}, \{2, 4\}, \{2, 5\}, \{2, 8\}, \{2, 9\}, \{2, 11\},$$ \\
& $$  \{3, 4\}, \{3, 5\}, \{3, 6\}, \{3, 7\},    \{3, 8\}, \{3, 9\}, \{4, 6\}, \{4, 7\}, \{4, 8\},$$ \\
& $$ \{4, 10\}, \{5, 6\}, \{5, 7\}, \{5, 8\},   \{5, 10\}, \{6, 8\}, \{6, 11\}, \{7, 8\}, \{7, 11\}\}$$ \\ \hline
8 &
$$ \{\{1, 6\}, \{1, 7\}, \{1, 8\},  \{1, 9\}, \{1, 10\}, \{2, 4\}, \{2, 5\}, \{2, 6\}, \{2, 7\}, \{2, 8\},$$ \\
& $$ \{2, 9\}, \{3, 4\},    \{3, 5\}, \{3, 6\}, \{3, 7\}, \{3, 8\}, \{3, 9\}, \{4, 6\}, \{4, 7\},$$ \\
& $$ \{4, 8\}, \{4, 10\},    \{5, 6\}, \{5, 7\}, \{5, 8\}, \{5, 10\}, \{6, 11\}, \{7, 11\}, \{8, 11\}\}$$ \\ \hline
9 & 
$$\{\{1, 8\}, \{1, 9\}, \{1, 10\}, \{1, 11\}, \{2, 5\}, \{2, 7\}, \{2, 8\}, \{2, 9\}, \{2, 12\},    \{3, 4\}, $$ \\
& $$\{3, 5\}, \{3, 6\}, \{3, 7\}, \{3, 8\}, \{3, 9\}, \{4, 7\}, \{4, 8\}, \{4, 10\},    \{4, 12\},$$ \\
& $$ \{5, 6\}, \{5, 7\}, \{5, 8\}, \{5, 10\}, \{6, 8\}, \{6, 11\}, \{6, 12\}, \{7, 8\}, \{7, 11\}\}$$ \\ \hline
10 &
$$\{\{1, 8\}, \{1, 9\}, \{1, 10\}, \{1, 11\}, \{2, 5\}, \{2, 6\},    \{2, 7\}, \{2, 9\}, \{2, 12\}, \{3, 4\},$$ \\
& $$ \{3, 5\}, \{3, 6\}, \{3, 7\}, \{3, 8\}, \{3, 9\},    \{4, 6\}, \{4, 7\}, \{4, 10\}, \{4, 12\}, $$ \\
& $$\{5, 6\}, \{5, 7\}, \{5, 8\}, \{5, 10\}, \{6, 8\},    \{6, 11\}, \{7, 8\}, \{7, 11\}, \{8, 12\}\}$$ \\
\hline
11 &
$$\{\{1, 6\}, \{1, 7\}, \{1, 8\}, \{1, 9\}, \{1, 10\}, \{2, 5\}, \{2, 7\}, \{2, 8\}, \{2, 9\}, \{2, 11\}, $$ \\
& $$  \{3, 4\}, \{3, 7\}, \{3, 8\},    \{3, 9\}, \{3, 12\}, \{4, 7\}, \{4, 8\}, \{4, 10\}, \{4, 11\}, $$ \\
& $$ \{5, 7\}, \{5, 8\},    \{5, 10\}, \{5, 12\}, \{6, 7\}, \{6, 8\}, \{6, 11\}, \{6, 12\}, \{7, 8\}\}$$ \\ \hline
12 &
$$ \{\{1, 6\}, \{1, 7\}, \{1, 8\}, \{1, 9\}, \{1, 10\}, \{2, 5\}, \{2, 7\}, \{2, 8\}, \{2, 9\},    \{2, 11\}, $$ \\
& $$  \{3, 4\}, \{3, 6\}, \{3, 8\}, \{3, 9\}, \{3, 12\}, \{4, 7\}, \{4, 8\}, \{4, 10\},    \{4, 11\}, $$ \\
& $$  \{5, 6\}, \{5, 8\}, \{5, 10\}, \{5, 12\}, \{6, 7\}, \{6, 8\}, \{6, 11\}, \{7, 8\}, \{7, 12\}\}$$ \\ \hline
13 &
$$\{\{1, 6\}, \{1, 7\}, \{1, 8\}, \{1, 9\}, \{1, 10\}, \{2, 5\},   \{2, 7\}, \{2, 8\}, \{2, 9\}, \{2, 11\}, $$ \\
& $$  \{3, 4\}, \{3, 5\}, \{3, 6\}, \{3, 9\}, \{3, 12\},    \{4, 7\}, \{4, 8\}, \{4, 10\}, \{4, 11\},  $$ \\
& $$ \{5, 6\}, \{5, 7\}, \{5, 8\}, \{5, 10\}, \{6, 7\},    \{6, 8\}, \{6, 11\}, \{7, 12\}, \{8, 12\}\}$$ \\ \hline
14 &
$$ \{\{1, 6\}, \{1, 7\}, \{1, 8\}, \{1, 9\}, \{1, 10\}, \{2, 4\}, \{2, 5\}, \{2, 8\}, \{2, 9\}, \{2, 11\},$$ \\
& $$ \{3, 4\}, \{3, 5\}, \{3, 6\},    \{3, 7\}, \{3, 8\}, \{3, 9\}, \{4, 7\}, \{4, 10\}, \{4, 12\}, $$ \\
& $$\{5, 6\}, \{5, 7\}, \{5, 8\},    \{5, 10\}, \{6, 11\}, \{6, 12\}, \{7, 8\}, \{7, 11\}, \{8, 12\}\}$$ \\ \hline
15 &
$$\{\{1, 8\}, \{1, 9\}, \{1, 10\}, \{1, 11\}, \{2, 8\}, \{2, 9\}, \{2, 12\}, \{2, 13\}, \{3, 4\}, \{3, 5\},$$ \\
& $$ \{3, 6\}, \{3, 7\}, \{3, 8\}, \{3, 9\}, \{4, 7\}, \{4, 8\}, \{4, 10\},    \{4, 12\}, \{5, 6\},$$ \\
& $$ \{5, 8\}, \{5, 10\}, \{5, 13\}, \{6, 8\}, \{6, 11\}, \{6, 12\}, \{7, 8\}, \{7, 11\}, \{7, 13\}\}$$ \\ \hline
16 &
$$\{\{1, 8\}, \{1, 9\}, \{1, 10\}, \{1, 11\}, \{2, 7\},    \{2, 9\}, \{2, 12\}, \{2, 13\}, \{3, 4\}, \{3, 5\},$$ \\
& $$ \{3, 6\}, \{3, 7\}, \{3, 8\}, \{3, 9\},    \{4, 7\}, \{4, 8\}, \{4, 10\}, \{4, 12\}, \{5, 6\},$$ \\
& $$ \{5, 7\}, \{5, 10\}, \{5, 13\},    \{6, 8\}, \{6, 11\}, \{6, 12\}, \{7, 8\}, \{7, 11\}, \{8, 13\}\}$$ 
     \end{tabular}
\caption{Edges Lists for graphs in $K_8$ family}      
\label{tblK81}%
   \end{center}   
\end{table}

\begin{table}[hp]
   \begin{center}
      \begin{tabular}{c|c}
            
      $n$ & Edges of $C_n$ \\ \hline
17 &
$$\{\{1, 8\}, \{1, 9\}, \{1, 10\}, \{1, 11\}, \{2, 5\}, \{2, 7\}, \{2, 8\}, \{2, 9\}, \{2, 12\},    \{3, 4\},$$ \\
& $$ \{3, 6\}, \{3, 8\}, \{3, 9\}, \{3, 13\}, \{4, 7\}, \{4, 8\}, \{4, 10\}, \{4, 12\},    \{5, 6\}, $$ \\
& $$\{5, 8\}, \{5, 10\}, \{5, 13\}, \{6, 8\}, \{6, 11\}, \{6, 12\}, \{7, 8\},  \{7, 11\}, \{7, 13\}\}$$ \\ \hline
18 & 
$$ \{\{1, 8\}, \{1, 9\}, \{1, 10\}, \{1, 11\}, \{2, 5\}, \{2, 7\},    \{2, 8\}, \{2, 9\}, \{2, 12\}, \{3, 4\}, $$ \\
& $$ \{3, 6\}, \{3, 7\}, \{3, 9\}, \{3, 13\}, \{4, 7\},    \{4, 8\}, \{4, 10\}, \{4, 12\}, \{5, 6\},$$ \\
& $$ \{5, 7\}, \{5, 10\}, \{5, 13\}, \{6, 8\},   \{6, 11\}, \{6, 12\}, \{7, 8\}, \{7, 11\}, \{8, 13\}\}$$ \\ \hline
19 & 
$$ \{\{1, 8\}, \{1, 9\}, \{1, 10\}, \{1, 11\}, \{2, 5\}, \{2, 6\}, \{2, 7\}, \{2, 9\}, \{2, 12\},    \{3, 4\},$$ \\
& $$ \{3, 6\}, \{3, 7\}, \{3, 9\}, \{3, 13\}, \{4, 6\}, \{4, 7\}, \{4, 10\}, \{4, 12\},  \{5, 6\},$$ \\
& $$ \{5, 7\}, \{5, 10\}, \{5, 13\}, \{6, 8\}, \{6, 11\}, \{7, 8\}, \{7, 11\},   \{8, 12\}, \{8, 13\}\}$$ \\ \hline
20 & 
$$ \{\{1, 8\}, \{1, 9\}, \{1, 10\}, \{1, 11\}, \{2, 5\}, \{2, 6\},    \{2, 7\}, \{2, 9\}, \{2, 12\}, \{3, 4\}, $$ \\
& $$\{3, 5\}, \{3, 7\}, \{3, 9\}, \{3, 13\}, \{4, 6\},  \{4, 7\}, \{4, 10\}, \{4, 12\}, \{5, 6\}, $$ \\
& $$\{5, 7\}, \{5, 8\}, \{5, 10\}, \{6, 11\},  \{6, 13\}, \{7, 8\}, \{7, 11\}, \{8, 12\}, \{8, 13\}\}$$ \\ \hline
21 &
$$\{\{1, 6\}, \{1, 7\}, \{1, 8\}, \{1, 9\}, \{1, 10\}, \{2, 5\}, \{2, 7\}, \{2, 8\}, \{2, 9\},  \{2, 11\}, $$ \\
& $$\{3, 4\}, \{3, 6\}, \{3, 8\}, \{3, 9\}, \{3, 12\}, \{4, 7\}, \{4, 8\}, \{4, 10\},  \{4, 11\}, $$ \\
& $$\{5, 6\}, \{5, 8\}, \{5, 10\}, \{5, 12\}, \{6, 11\}, \{6, 13\}, \{7, 12\},  \{7, 13\}, \{8, 13\}\}$$ \\ \hline
22 &
$$ \{\{1, 8\}, \{1, 9\}, \{1, 10\}, \{1, 11\}, \{2, 8\}, \{2, 9\},  \{2, 12\}, \{2, 13\}, \{3, 5\}, \{3, 6\},$$ \\
& $$ \{3, 8\}, \{3, 9\}, \{3, 14\}, \{4, 8\},  \{4, 10\}, \{4, 12\}, \{4, 14\}, \{5, 6\}, \{5, 8\}, $$ \\
& $$\{5, 10\}, \{5, 13\}, \{6, 8\},  \{6, 11\}, \{6, 12\}, \{7, 8\}, \{7, 11\}, \{7, 13\}, \{7, 14\}\}$$ \\ \hline
23 &
$$  \{\{1, 8\}, \{1, 9\}, \{1, 10\}, \{1, 11\}, \{2, 8\}, \{2, 9\}, \{2, 12\}, \{2, 13\},  \{3, 5\}, \{3, 6\}, $$ \\
& $$\{3, 7\}, \{3, 9\}, \{3, 14\}, \{4, 7\}, \{4, 10\}, \{4, 12\},  \{4, 14\}, \{5, 6\}, \{5, 8\}, $$ \\
& $$\{5, 10\}, \{5, 13\}, \{6, 8\}, \{6, 11\}, \{6, 12\},  \{7, 8\}, \{7, 11\}, \{7, 13\}, \{8, 14\}\}$$ \\ \hline
24 &
$$ \{\{1, 8\}, \{1, 9\}, \{1, 10\}, \{1, 11\},  \{2, 8\}, \{2, 9\}, \{2, 12\}, \{2, 13\}, \{3, 4\}, \{3, 5\},$$ \\
& $$ \{3, 6\}, \{3, 7\}, \{3, 8\},  \{3, 9\}, \{4, 10\}, \{4, 12\}, \{4, 14\}, \{5, 6\}, \{5, 8\}, $$ \\
& $$\{5, 10\}, \{5, 13\},  \{6, 8\}, \{6, 11\}, \{6, 12\}, \{7, 11\}, \{7, 13\}, \{7, 14\}, \{8, 14\}\}$$ \\ \hline
25 & 
$$ \{\{1, 8\}, \{1, 9\}, \{1, 10\}, \{1, 11\}, \{2, 7\}, \{2, 9\}, \{2, 12\}, \{2, 13\},  \{3, 4\}, \{3, 6\},$$ \\
& $$ \{3, 8\}, \{3, 9\}, \{3, 14\}, \{4, 7\}, \{4, 8\}, \{4, 10\}, \{4, 12\},  \{5, 6\}, \{5, 10\},$$ \\
& $$ \{5, 13\}, \{5, 14\}, \{6, 8\}, \{6, 11\}, \{6, 12\}, \{7, 8\},  \{7, 11\}, \{7, 14\}, \{8, 13\}\}$$ \\ \hline
26 & 
$$ \{\{1, 8\}, \{1, 9\}, \{1, 10\}, \{1, 11\}, \{2, 7\},  \{2, 9\}, \{2, 12\}, \{2, 13\}, \{3, 4\}, \{3, 5\},$$ \\
& $$ \{3, 6\}, \{3, 9\}, \{3, 14\}, \{4, 7\},  \{4, 8\}, \{4, 10\}, \{4, 12\}, \{5, 6\}, \{5, 7\}, $$ \\
& $$\{5, 10\}, \{5, 13\}, \{6, 8\},  \{6, 11\}, \{6, 12\}, \{7, 11\}, \{7, 14\}, \{8, 13\}, \{8, 14\}\}$$ \\ \hline
27 & 
$$ \{\{1, 8\}, \{1, 9\}, \{1, 10\}, \{1, 11\}, \{2, 5\}, \{2, 6\}, \{2, 7\}, \{2, 9\}, \{2, 12\},  \{3, 4\},$$ \\
& $$ \{3, 5\}, \{3, 7\}, \{3, 9\}, \{3, 13\}, \{4, 6\}, \{4, 7\}, \{4, 10\}, \{4, 12\},  \{5, 6\}, $$ \\
& $$\{5, 10\}, \{5, 14\}, \{6, 11\}, \{6, 13\}, \{7, 11\}, \{7, 14\}, \{8, 12\},  \{8, 13\}, \{8, 14\}\}$$ \\ \hline
28 &
$$ \{\{1, 8\}, \{1, 9\}, \{1, 10\}, \{1, 11\}, \{2, 8\}, \{2, 9\},  \{2, 12\}, \{2, 13\}, \{3, 8\}, \{3, 9\}, $$ \\
& $$\{3, 14\}, \{3, 15\}, \{4, 8\}, \{4, 10\},  \{4, 12\}, \{4, 14\}, \{5, 8\}, \{5, 10\}, \{5, 13\}, $$ \\
& $$\{5, 15\}, \{6, 8\}, \{6, 11\},  \{6, 12\}, \{6, 15\}, \{7, 8\}, \{7, 11\}, \{7, 13\}, \{7, 14\}\}$$ \\ \hline
29 &
$$\{\{1, 8\}, \{1, 9\}, \{1, 10\}, \{1, 11\}, \{2, 8\}, \{2, 9\}, \{2, 12\}, \{2, 13\},  \{3, 6\}, \{3, 9\}, $$ \\
& $$\{3, 14\}, \{3, 15\}, \{4, 8\}, \{4, 10\}, \{4, 12\}, \{4, 14\},  \{5, 6\}, \{5, 10\}, \{5, 13\}, $$ \\
& $$\{5, 15\}, \{6, 8\}, \{6, 11\}, \{6, 12\}, \{7, 8\},  \{7, 11\}, \{7, 13\}, \{7, 14\}, \{8, 15\}\}$$ \\ \hline
30 &
$$ \{\{1, 8\}, \{1, 9\}, \{1, 10\}, \{1, 11\},  \{2, 8\}, \{2, 9\}, \{2, 12\}, \{2, 13\}, \{3, 5\}, \{3, 6\},$$ \\
& $$\{3, 7\}, \{3, 9\}, \{3, 14\},   \{4, 7\}, \{4, 10\}, \{4, 12\}, \{4, 14\}, \{5, 10\}, \{5, 13\},$$ \\
& $$ \{5, 15\}, \{6, 11\},  \{6, 12\}, \{6, 15\}, \{7, 8\}, \{7, 11\}, \{7, 13\}, \{8, 14\}, \{8, 15\}\}$$ \\ \hline
31 &
$$ \{\{1, 8\}, \{1, 9\}, \{1, 10\}, \{1, 11\}, \{2, 7\}, \{2, 9\}, \{2, 12\}, \{2, 13\},  \{3, 4\}, \{3, 9\}, $$ \\
& $$\{3, 14\}, \{3, 15\}, \{4, 7\}, \{4, 8\}, \{4, 10\}, \{4, 12\},  \{5, 6\}, \{5, 10\}, \{5, 13\},$$ \\
& $$ \{5, 14\}, \{6, 11\}, \{6, 12\}, \{6, 15\}, \{7, 8\},  \{7, 11\}, \{7, 14\}, \{8, 13\}, \{8, 15\}\}$$ \\ \hline
32 &
$$ \{\{1, 8\}, \{1, 9\}, \{1, 10\}, \{1, 11\},  \{2, 7\}, \{2, 9\}, \{2, 12\}, \{2, 13\}, \{3, 4\}, \{3, 9\}, $$ \\
& $$\{3, 14\}, \{3, 15\},  \{4, 10\}, \{4, 12\}, \{4, 16\}, \{5, 6\}, \{5, 10\}, \{5, 13\}, \{5, 14\},$$ \\
& $$ \{6, 11\},  \{6, 12\}, \{6, 15\}, \{7, 11\}, \{7, 14\}, \{7, 16\}, \{8, 13\}, \{8, 15\}, \{8, 16\}\}$$
      \end{tabular}
\caption{Edges Lists for graphs in $K_8$ family (continued)}      
\label{tblK82}%
   \end{center}
\end{table}

\section{$K_8$ family graphs are not $3$-apex}
For each cousin $C_n$  ($5 \leq n \leq 32$) in the $K_8$ family, a $\yt$ move at vertex $9$ results in another graph in that family. 
We can show $C_n$ is not $3$-apex by arguing that $C_n-9$ is not $2$-apex. 
For this we list, in Table~\ref{tblN3A}, edges that are to be deleted or contracted in $C_n-9$ to produce a minor which
is a cousin in the $K_{3^2,1^2}$ family. Cousins in that family are numbered as in~\cite{GMN} 

For example, deleting edges $\{\{4,6\},\{4,8\},\{6,8\}\}$ of $C_5 - 9$ results in cousin 4 of the $K_{3^2,1^2}$ family.
In the case of $C_8-9$, first delete edges $\{4,6\}$ and $\{4,8\}$ and then contract edge $\{8,11\}$ to 
realize cousin 2 of the $K_{3^2,1^2}$ family.

\begin{table}[h]\begin{center}\begin{tabular}{c|l|c}

  $n$ & Edges deleted/contracted & $K_{3^2,1^2}$ cousin \\\hline\hline

  $5$  & $\{\{4,6\},\{4,8\},\{6,8\}\}$ & $4$ \\

  $6$  & $\{\{4,7\},\{4,8\},\{7,8\}\}$ & $4$ \\

  $7$  & $\{\{4,6\},\{4,8\},\{6,8\}\}$ &  $6$ \\

  $8$  & $\{\{4,6\},\{4,7\}\} \;/\;\{8,11\}$ & $2$ \\

  $9$  & $\{\{4,7\},\{4,8\},\{7,8\}\}$ & $8$ \\

  $10$ & $\{\{5,6\},\{5,8\},\{6,8\}\}$ & $9$ \\

  $11$ & $\{\{1,7\},\{1,8\},\{7,8\}\}$ & $11$ \\

  $12$ & $\{\{4,7\},\{4,8\},\{7,8\}\}$ & $9$ \\

  $13$ & $\{\{5,6\},\{5,7\},\{6,7\}\}$ &  $14$ \\

  $14$ & $\{\{5,7\},\{5,8\},\{7,8\}\}$ & $16$ \\

  $15$ & $\{\{4,7\},\{4,8\},\{7,8\}\}$ & $17$ \\

  $16$ & $\{\{4,7\},\{4,8\},\{7,8\}\}$ & $18$ \\

  $17$ & $\{\{4,7\},\{4,8\},\{7,8\}\}$ & $18$ \\

  $18$ & $\{\{4,7\},\{4,8\},\{7,8\}\}$ & $19$ \\

  $19$ & $\;/\;\{\{1,10\},\{2,12\},\{3,13\}\}$ & $3$ \\

  $20$ & $\{\{5,7\},\{5,7\},\{7,8\}\}$ & $23$ \\

  $21$ & $\{4,7\} \;/\;\{\{2,11\},\{8,13\}\}$ & $5$ \\

  $22$ & $\{\{5,6\},\{5,8\},\{6,8\}\}$ & $29$ \\

  $23$ & $\{\{5,6\},\{5,8\},\{6,8\}\}$ & $29$ \\

  $24$ & $\{\{5,6\},\{5,8\},\{6,8\}\}$ & $29$ \\

  $25$ & $\{\{4,7\},\{4,8\},\{7,8\}\}$ & $30$ \\

  $26$ & $\{5,7\} \;/\;\{\{2,13\},\{3,14\}\}$ & $10$ \\

  $27$ & $\;/\;\{\{1,10\},\{2,12\},\{7,14\}\}$ & $7$ \\

  $28$ & $\;/\;\{\{1,10\},\{2,12\},\{3,15\}\}$ & $13$ \\

  $29$ & $\;/\;\{\{1,10\},\{2,13\},\{3,14\}\}$ & $14$ \\

  $30$ & $\;/\;\{\{1,10\},\{3,14\},\{6,15\}\}$ & $10$ \\

  $31$ & $\{\{4,7\},\{4,8\},\{7,8\}\}$ & $42$ \\

  $32$ & $\;/\;\{\{1,10\},\{2,13\},\{7,16\}\}$ & $24$ \\

\hline\end{tabular}
\caption{Identifying a minor of $C_n-9$ in the $K_{3^2,1^2}$ family}
\label{tblN3A}%
\end{center}\end{table}

\section{Proper minors are $3$-apex}
%
For each cousin $C_n$ ($5 \leq n \leq 32$) in the $K_8$ family,
each proper minor of $C_n$ is $3$-apex.
We show this in Tables~\ref{tblPM3A1}--\ref{tblPM3A7} by considering 
each edge $e$ of $C_n$ (distinct up to symmetry) and for the graphs 
$C_n-e$ and $C_n/e$ (edge deletion and edge contraction, respectively) 
listing three vertices whose deletion gives a planar subgraph.

In Tables~\ref{tblPM3A1}--\ref{tblPM3A7} we use the convention that the vertex
that is the result of an edge contraction adopts the name of the 
smaller of the two vertex names on the edge that was contracted.
For example if we were to contract the edge $\{1,8\}$,
the new resulting vertex would be referred to as vertex $1$.

\begin{table}[hp]\begin{center}\begin{tabular}{c|c|l}
$n$ & Edge $e$ & Apex Sets  \\\hline\hline

  5 & $\{1, 8\}$ & $C_{5}-e$: 2, 3, 4; $C_{5}/e$: 1, 2, 3 \\
    & $\{1, 9\}$ & $C_{5}-e$: 2, 4, 5; $C_{5}/e$: 1, 4, 5 \\
    & $\{2, 4\}$ & $C_{5}-e$: 1, 3, 5; $C_{5}/e$: 1, 2, 3 \\
    & $\{2, 8\}$ & $C_{5}-e$: 1, 3, 4; $C_{5}/e$: 1, 2, 3 \\
    & $\{2, 9\}$ & $C_{5}-e$: 1, 4, 5; $C_{5}/e$: 2, 4, 5 \\
    \hline
   
  6 & $\{1, 6\}$ & $C_{6}-e$: 2, 3, 4; $C_{6}/e$: 1, 2, 3 \\
    & $\{1, 7\}$ & $C_{6}-e$: 2, 3, 4; $C_{6}/e$: 1, 2, 3 \\
    & $\{1, 9\}$ & $C_{6}-e$: 2, 4, 5; $C_{6}/e$: 1, 4, 5 \\
    & $\{3, 5\}$ & $C_{6}-e$: 1, 2, 4; $C_{6}/e$: 1, 2, 3 \\
    & $\{3, 7\}$ & $C_{6}-e$: 1, 2, 4; $C_{6}/e$: 1, 2, 3 \\
    & $\{3, 9\}$ & $C_{6}-e$: 1, 4, 5; $C_{6}/e$: 3, 4, 5 \\
    & $\{7, 8\}$ & $C_{6}-e$: 1, 2, 3; $C_{6}/e$: 1, 2, 3 \\
    \hline
   
  7 & $\{1, 6\}$ & $C_{7}-e$: 2, 3, 4; $C_{7}/e$: 1, 2, 3 \\
    & $\{1, 8\}$ & $C_{7}-e$: 2, 3, 4; $C_{7}/e$: 1, 2, 3 \\
    & $\{1, 9\}$ & $C_{7}-e$: 2, 4, 5; $C_{7}/e$: 1, 4, 5 \\
    & $\{1, 10\}$ & $C_{7}-e$: 2, 3, 4; $C_{7}/e$: 1, 2, 3 \\
    & $\{3, 4\}$ & $C_{7}-e$: 1, 2, 5; $C_{7}/e$: 1, 2, 3 \\
    & $\{3, 8\}$ & $C_{7}-e$: 1, 2, 4; $C_{7}/e$: 1, 2, 3 \\
    & $\{3, 9\}$ & $C_{7}-e$: 1, 4, 5; $C_{7}/e$: 3, 4, 5 \\
    & $\{4, 6\}$ & $C_{7}-e$: 1, 2, 3; $C_{7}/e$: 1, 2, 3 \\
    & $\{4, 8\}$ & $C_{7}-e$: 1, 2, 3; $C_{7}/e$: 1, 2, 3 \\
    & $\{4, 10\}$ & $C_{7}-e$: 1, 2, 3; $C_{7}/e$: 2, 3, 4 \\
    \hline
   
  8 & $\{1, 6\}$ & $C_{8}-e$: 2, 3, 4; $C_{8}/e$: 1, 2, 3 \\
    & $\{1, 9\}$ & $C_{8}-e$: 2, 4, 6; $C_{8}/e$: 1, 4, 6 \\
    & $\{2, 4\}$ & $C_{8}-e$: 1, 3, 6; $C_{8}/e$: 1, 2, 3 \\
    & $\{2, 6\}$ & $C_{8}-e$: 1, 3, 4; $C_{8}/e$: 1, 2, 3 \\
    & $\{2, 9\}$ & $C_{8}-e$: 1, 4, 6; $C_{8}/e$: 2, 4, 6 \\
    & $\{6, 11\}$ & $C_{8}-e$: 1, 2, 3; $C_{8}/e$: 1, 2, 3 \\
    \hline
   
  9 & $\{1, 8\}$ & $C_{9}-e$: 2, 3, 4; $C_{9}/e$: 1, 2, 3 \\
    & $\{1, 9\}$ & $C_{9}-e$: 2, 4, 5; $C_{9}/e$: 1, 4, 5 \\
    & $\{2, 5\}$ & $C_{9}-e$: 1, 3, 4; $C_{9}/e$: 1, 2, 3 \\
    & $\{2, 8\}$ & $C_{9}-e$: 1, 3, 4; $C_{9}/e$: 1, 2, 3 \\
    & $\{2, 9\}$ & $C_{9}-e$: 1, 4, 5; $C_{9}/e$: 2, 4, 5 \\
    & $\{2, 12\}$ & $C_{9}-e$: 1, 3, 4; $C_{9}/e$: 1, 2, 3 \\
    & $\{3, 5\}$ & $C_{9}-e$: 1, 2, 4; $C_{9}/e$: 1, 2, 3 \\
    & $\{3, 8\}$ & $C_{9}-e$: 1, 2, 4; $C_{9}/e$: 1, 2, 3 \\
    & $\{3, 9\}$ & $C_{9}-e$: 1, 4, 5; $C_{9}/e$: 3, 4, 5 \\
   
\hline\end{tabular}
\caption{Every proper minor of $C_n$ is $3$-apex.}
\label{tblPM3A1}%
\end{center}\end{table}

\begin{table}[hp]\begin{center}\begin{tabular}{c|c|l}
$n$ & Edge $e$ & Apex Sets  \\\hline\hline
   
  10 & $\{1, 8\}$ & $C_{10}-e$: 2, 3, 4; $C_{10}/e$: 1, 2, 3 \\
    & $\{1, 9\}$ & $C_{10}-e$: 2, 4, 5; $C_{10}/e$: 1, 4, 5 \\
    & $\{1, 11\}$ & $C_{10}-e$: 2, 3, 4; $C_{10}/e$: 1, 2, 3 \\
    & $\{2, 5\}$ & $C_{10}-e$: 1, 3, 4; $C_{10}/e$: 1, 2, 3 \\
    & $\{2, 6\}$ & $C_{10}-e$: 1, 3, 4; $C_{10}/e$: 1, 2, 3 \\
    & $\{2, 9\}$ & $C_{10}-e$: 1, 4, 5; $C_{10}/e$: 2, 4, 5 \\
    & $\{2, 12\}$ & $C_{10}-e$: 1, 3, 4; $C_{10}/e$: 1, 2, 3 \\
    & $\{3, 5\}$ & $C_{10}-e$: 1, 2, 4; $C_{10}/e$: 1, 2, 3 \\
    & $\{3, 6\}$ & $C_{10}-e$: 1, 2, 4; $C_{10}/e$: 1, 2, 3 \\
    & $\{3, 8\}$ & $C_{10}-e$: 1, 2, 4; $C_{10}/e$: 1, 2, 3 \\
    & $\{3, 9\}$ & $C_{10}-e$: 1, 4, 5; $C_{10}/e$: 3, 4, 5 \\
    & $\{6, 8\}$ & $C_{10}-e$: 1, 2, 3; $C_{10}/e$: 1, 2, 3 \\
    & $\{6, 11\}$ & $C_{10}-e$: 1, 2, 3; $C_{10}/e$: 2, 3, 4 \\
    & $\{8, 12\}$ & $C_{10}-e$: 1, 2, 3; $C_{10}/e$: 1, 3, 5 \\
    \hline
   
  11 & $\{1, 6\}$ & $C_{11}-e$: 2, 3, 4; $C_{11}/e$: 1, 2, 3 \\
    & $\{1, 7\}$ & $C_{11}-e$: 2, 3, 4; $C_{11}/e$: 1, 2, 3 \\
    & $\{1, 9\}$ & $C_{11}-e$: 2, 4, 5; $C_{11}/e$: 1, 4, 5 \\
    & $\{7, 8\}$ & $C_{11}-e$: 1, 2, 3; $C_{11}/e$: 1, 2, 3 \\
    \hline
   
  12 & $\{1, 6\}$ & $C_{12}-e$: 2, 3, 4; $C_{12}/e$: 1, 2, 3 \\
    & $\{1, 8\}$ & $C_{12}-e$: 2, 3, 4; $C_{12}/e$: 1, 2, 3 \\
    & $\{1, 9\}$ & $C_{12}-e$: 2, 4, 5; $C_{12}/e$: 1, 4, 5 \\
    & $\{2, 5\}$ & $C_{12}-e$: 1, 3, 4; $C_{12}/e$: 1, 2, 3 \\
    & $\{2, 7\}$ & $C_{12}-e$: 1, 3, 4; $C_{12}/e$: 1, 2, 3 \\
    & $\{2, 8\}$ & $C_{12}-e$: 1, 3, 4; $C_{12}/e$: 1, 2, 3 \\
    & $\{2, 9\}$ & $C_{12}-e$: 1, 4, 5; $C_{12}/e$: 2, 4, 5 \\
    & $\{2, 11\}$ & $C_{12}-e$: 1, 3, 4; $C_{12}/e$: 1, 2, 3 \\
    & $\{6, 7\}$ & $C_{12}-e$: 1, 2, 3; $C_{12}/e$: 1, 2, 3 \\
    & $\{6, 8\}$ & $C_{12}-e$: 1, 2, 3; $C_{12}/e$: 1, 2, 3 \\
    & $\{6, 11\}$ & $C_{12}-e$: 1, 2, 3; $C_{12}/e$: 1, 3, 5 \\
    \hline
   
  13 & $\{1, 6\}$ & $C_{13}-e$: 2, 3, 4; $C_{13}/e$: 1, 2, 3 \\
    & $\{1, 7\}$ & $C_{13}-e$: 2, 3, 4; $C_{13}/e$: 1, 2, 3 \\
    & $\{1, 9\}$ & $C_{13}-e$: 2, 4, 5; $C_{13}/e$: 1, 4, 5 \\
    & $\{1, 10\}$ & $C_{13}-e$: 2, 3, 4; $C_{13}/e$: 1, 2, 3 \\
    & $\{3, 4\}$ & $C_{13}-e$: 1, 2, 5; $C_{13}/e$: 1, 2, 3 \\
    & $\{3, 5\}$ & $C_{13}-e$: 1, 2, 4; $C_{13}/e$: 1, 2, 3 \\
    & $\{3, 9\}$ & $C_{13}-e$: 1, 4, 5; $C_{13}/e$: 3, 4, 5 \\
    & $\{3, 12\}$ & $C_{13}-e$: 1, 2, 4; $C_{13}/e$: 1, 2, 3 \\
    & $\{4, 7\}$ & $C_{13}-e$: 1, 2, 3; $C_{13}/e$: 1, 2, 3 \\
    & $\{4, 10\}$ & $C_{13}-e$: 1, 2, 3; $C_{13}/e$: 2, 3, 4 \\
    & $\{5, 6\}$ & $C_{13}-e$: 1, 2, 3; $C_{13}/e$: 1, 2, 3 \\
    & $\{5, 7\}$ & $C_{13}-e$: 1, 2, 3; $C_{13}/e$: 1, 2, 3 \\
    & $\{5, 10\}$ & $C_{13}-e$: 1, 2, 3; $C_{13}/e$: 2, 3, 5 \\
    & $\{7, 12\}$ & $C_{13}-e$: 1, 2, 3; $C_{13}/e$: 1, 2, 4 \\
    \hline
   
\hline\end{tabular}
\caption{Every proper minor of $C_n$ is $3$-apex (continued)}
\label{tblPM3A2}%
\end{center}\end{table}

\begin{table}[hp]\begin{center}\begin{tabular}{c|c|l}
$n$ & Edge $e$ & Apex Sets  \\\hline\hline
   
  14 & $\{1, 6\}$ & $C_{14}-e$: 2, 3, 4; $C_{14}/e$: 1, 2, 3 \\
    & $\{1, 7\}$ & $C_{14}-e$: 2, 3, 4; $C_{14}/e$: 1, 2, 3 \\
    & $\{1, 9\}$ & $C_{14}-e$: 2, 4, 5; $C_{14}/e$: 1, 4, 5 \\
    & $\{3, 5\}$ & $C_{14}-e$: 1, 2, 4; $C_{14}/e$: 1, 2, 3 \\
    & $\{3, 8\}$ & $C_{14}-e$: 1, 2, 4; $C_{14}/e$: 1, 2, 3 \\
    & $\{3, 9\}$ & $C_{14}-e$: 1, 4, 5; $C_{14}/e$: 3, 4, 5 \\
    \hline
   
  15 & $\{1, 8\}$ & $C_{15}-e$: 2, 3, 4; $C_{15}/e$: 1, 2, 3 \\
    & $\{1, 9\}$ & $C_{15}-e$: 2, 4, 5; $C_{15}/e$: 1, 4, 5 \\
    & $\{1, 10\}$ & $C_{15}-e$: 2, 3, 4; $C_{15}/e$: 1, 2, 3 \\
    & $\{3, 4\}$ & $C_{15}-e$: 1, 2, 5; $C_{15}/e$: 1, 2, 3 \\
    & $\{3, 8\}$ & $C_{15}-e$: 1, 2, 4; $C_{15}/e$: 1, 2, 3 \\
    & $\{3, 9\}$ & $C_{15}-e$: 1, 4, 5; $C_{15}/e$: 3, 4, 5 \\
    & $\{4, 7\}$ & $C_{15}-e$: 1, 2, 3; $C_{15}/e$: 1, 2, 3 \\
    & $\{4, 8\}$ & $C_{15}-e$: 1, 2, 3; $C_{15}/e$: 1, 2, 3 \\
    & $\{4, 10\}$ & $C_{15}-e$: 1, 2, 3; $C_{15}/e$: 2, 3, 4 \\
    \hline
   
  16 & $\{1, 8\}$ & $C_{16}-e$: 2, 3, 4; $C_{16}/e$: 1, 2, 3 \\
    & $\{1, 9\}$ & $C_{16}-e$: 2, 4, 5; $C_{16}/e$: 1, 4, 5 \\
    & $\{1, 10\}$ & $C_{16}-e$: 2, 3, 4; $C_{16}/e$: 1, 2, 3 \\
    & $\{1, 11\}$ & $C_{16}-e$: 2, 3, 4; $C_{16}/e$: 1, 2, 3 \\
    & $\{3, 4\}$ & $C_{16}-e$: 1, 2, 5; $C_{16}/e$: 1, 2, 3 \\
    & $\{3, 5\}$ & $C_{16}-e$: 1, 2, 4; $C_{16}/e$: 1, 2, 3 \\
    & $\{3, 7\}$ & $C_{16}-e$: 1, 2, 4; $C_{16}/e$: 1, 2, 3 \\
    & $\{3, 9\}$ & $C_{16}-e$: 1, 4, 5; $C_{16}/e$: 3, 4, 5 \\
    & $\{4, 7\}$ & $C_{16}-e$: 1, 2, 3; $C_{16}/e$: 1, 2, 3 \\
    & $\{4, 10\}$ & $C_{16}-e$: 1, 2, 3; $C_{16}/e$: 2, 3, 4 \\
    & $\{5, 6\}$ & $C_{16}-e$: 1, 2, 3; $C_{16}/e$: 1, 2, 3 \\
    & $\{5, 7\}$ & $C_{16}-e$: 1, 2, 3; $C_{16}/e$: 1, 2, 3 \\
    & $\{5, 10\}$ & $C_{16}-e$: 1, 2, 3; $C_{16}/e$: 2, 3, 5 \\
    & $\{5, 13\}$ & $C_{16}-e$: 1, 2, 3; $C_{16}/e$: 1, 3, 4 \\
    & $\{7, 8\}$ & $C_{16}-e$: 1, 2, 3; $C_{16}/e$: 1, 2, 3 \\
    & $\{7, 11\}$ & $C_{16}-e$: 1, 2, 3; $C_{16}/e$: 2, 3, 4 \\
    \hline
   
  17 & $\{1, 8\}$ & $C_{17}-e$: 2, 3, 4; $C_{17}/e$: 1, 2, 3 \\
    & $\{1, 9\}$ & $C_{17}-e$: 2, 4, 5; $C_{17}/e$: 1, 4, 5 \\
    & $\{2, 5\}$ & $C_{17}-e$: 1, 3, 4; $C_{17}/e$: 1, 2, 3 \\
    & $\{2, 8\}$ & $C_{17}-e$: 1, 3, 4; $C_{17}/e$: 1, 2, 3 \\
    & $\{2, 9\}$ & $C_{17}-e$: 1, 4, 5; $C_{17}/e$: 2, 4, 5 \\
    & $\{2, 12\}$ & $C_{17}-e$: 1, 3, 4; $C_{17}/e$: 1, 2, 3 \\
    \hline
   
\hline\end{tabular}
\caption{Every proper minor of $C_n$ is $3$-apex (continued)}
\label{tblPM3A3}%
\end{center}\end{table}

\begin{table}[hp]\begin{center}\begin{tabular}{c|c|l}
$n$ & Edge $e$ & Apex Sets  \\\hline\hline
   
  18 & $\{1, 8\}$ & $C_{18}-e$: 2, 3, 4; $C_{18}/e$: 1, 2, 3 \\
    & $\{1, 9\}$ & $C_{18}-e$: 2, 4, 5; $C_{18}/e$: 1, 4, 5 \\
    & $\{1, 11\}$ & $C_{18}-e$: 2, 3, 4; $C_{18}/e$: 1, 2, 3 \\
    & $\{2, 5\}$ & $C_{18}-e$: 1, 3, 4; $C_{18}/e$: 1, 2, 3 \\
    & $\{2, 7\}$ & $C_{18}-e$: 1, 3, 4; $C_{18}/e$: 1, 2, 3 \\
    & $\{2, 8\}$ & $C_{18}-e$: 1, 3, 4; $C_{18}/e$: 1, 2, 3 \\
    & $\{2, 9\}$ & $C_{18}-e$: 1, 4, 5; $C_{18}/e$: 2, 4, 5 \\
    & $\{2, 12\}$ & $C_{18}-e$: 1, 3, 4; $C_{18}/e$: 1, 2, 3 \\
    & $\{3, 6\}$ & $C_{18}-e$: 1, 2, 4; $C_{18}/e$: 1, 2, 3 \\
    & $\{3, 7\}$ & $C_{18}-e$: 1, 2, 4; $C_{18}/e$: 1, 2, 3 \\
    & $\{3, 9\}$ & $C_{18}-e$: 1, 4, 5; $C_{18}/e$: 3, 4, 5 \\
    & $\{3, 13\}$ & $C_{18}-e$: 1, 2, 4; $C_{18}/e$: 1, 2, 3 \\
    & $\{6, 8\}$ & $C_{18}-e$: 1, 2, 3; $C_{18}/e$: 1, 2, 3 \\
    & $\{6, 11\}$ & $C_{18}-e$: 1, 2, 3; $C_{18}/e$: 2, 3, 4 \\
    & $\{6, 12\}$ & $C_{18}-e$: 1, 2, 3; $C_{18}/e$: 1, 3, 5 \\
    & $\{7, 8\}$ & $C_{18}-e$: 1, 2, 3; $C_{18}/e$: 1, 2, 3 \\
    & $\{7, 11\}$ & $C_{18}-e$: 1, 2, 3; $C_{18}/e$: 2, 3, 4 \\
    & $\{8, 13\}$ & $C_{18}-e$: 1, 2, 3; $C_{18}/e$: 1, 2, 4 \\
    \hline
   
  19 & $\{1, 8\}$ & $C_{19}-e$: 2, 3, 4; $C_{19}/e$: 1, 2, 3 \\
    & $\{1, 9\}$ & $C_{19}-e$: 2, 4, 5; $C_{19}/e$: 1, 4, 5 \\
    & $\{1, 11\}$ & $C_{19}-e$: 2, 3, 4; $C_{19}/e$: 1, 2, 3 \\
    & $\{2, 5\}$ & $C_{19}-e$: 1, 3, 4; $C_{19}/e$: 1, 2, 3 \\
    & $\{2, 6\}$ & $C_{19}-e$: 1, 3, 4; $C_{19}/e$: 1, 2, 3 \\
    & $\{2, 9\}$ & $C_{19}-e$: 1, 4, 5; $C_{19}/e$: 2, 4, 5 \\
    & $\{2, 12\}$ & $C_{19}-e$: 1, 3, 4; $C_{19}/e$: 1, 2, 3 \\
    & $\{6, 8\}$ & $C_{19}-e$: 1, 2, 3; $C_{19}/e$: 1, 2, 3 \\
    & $\{6, 11\}$ & $C_{19}-e$: 1, 2, 3; $C_{19}/e$: 2, 3, 4 \\
    & $\{8, 12\}$ & $C_{19}-e$: 1, 2, 3; $C_{19}/e$: 1, 3, 5 \\
    \hline
   
  20 & $\{1, 8\}$ & $C_{20}-e$: 2, 3, 4; $C_{20}/e$: 1, 2, 3 \\
    & $\{1, 9\}$ & $C_{20}-e$: 2, 4, 5; $C_{20}/e$: 1, 4, 5 \\
    & $\{1, 10\}$ & $C_{20}-e$: 2, 3, 4; $C_{20}/e$: 1, 2, 3 \\
    & $\{2, 5\}$ & $C_{20}-e$: 1, 3, 4; $C_{20}/e$: 1, 2, 3 \\
    & $\{2, 6\}$ & $C_{20}-e$: 1, 3, 4; $C_{20}/e$: 1, 2, 3 \\
    & $\{2, 7\}$ & $C_{20}-e$: 1, 3, 4; $C_{20}/e$: 1, 2, 3 \\
    & $\{2, 9\}$ & $C_{20}-e$: 1, 4, 5; $C_{20}/e$: 2, 4, 5 \\
    & $\{2, 12\}$ & $C_{20}-e$: 1, 3, 4; $C_{20}/e$: 1, 2, 3 \\
    & $\{4, 6\}$ & $C_{20}-e$: 1, 2, 3; $C_{20}/e$: 1, 2, 3 \\
    & $\{4, 7\}$ & $C_{20}-e$: 1, 2, 3; $C_{20}/e$: 1, 2, 3 \\
    & $\{4, 10\}$ & $C_{20}-e$: 1, 2, 3; $C_{20}/e$: 2, 3, 4 \\
    & $\{4, 12\}$ & $C_{20}-e$: 1, 2, 3; $C_{20}/e$: 1, 3, 4 \\
    & $\{5, 7\}$ & $C_{20}-e$: 1, 2, 3; $C_{20}/e$: 1, 2, 3 \\
    & $\{5, 8\}$ & $C_{20}-e$: 1, 2, 3; $C_{20}/e$: 1, 2, 3 \\
    & $\{5, 10\}$ & $C_{20}-e$: 1, 2, 3; $C_{20}/e$: 2, 3, 5 \\
    & $\{8, 12\}$ & $C_{20}-e$: 1, 2, 3; $C_{20}/e$: 1, 3, 5 \\
    \hline
   
\hline\end{tabular}
\caption{Every proper minor of $C_n$ is $3$-apex (continued)}
\label{tblPM3A4}%
\end{center}\end{table}

\begin{table}[hp]\begin{center}\begin{tabular}{c|c|l}
$n$ & Edge $e$ & Apex Sets  \\\hline\hline
   
  21 & $\{1, 6\}$ & $C_{21}-e$: 2, 3, 4; $C_{21}/e$: 1, 2, 3 \\
    & $\{1, 8\}$ & $C_{21}-e$: 2, 3, 4; $C_{21}/e$: 1, 2, 3 \\
    & $\{1, 9\}$ & $C_{21}-e$: 2, 4, 6; $C_{21}/e$: 1, 4, 6 \\
    & $\{2, 5\}$ & $C_{21}-e$: 1, 3, 6; $C_{21}/e$: 1, 2, 3 \\
    & $\{2, 7\}$ & $C_{21}-e$: 1, 3, 4; $C_{21}/e$: 1, 2, 3 \\
    & $\{2, 8\}$ & $C_{21}-e$: 1, 3, 4; $C_{21}/e$: 1, 2, 3 \\
    & $\{2, 9\}$ & $C_{21}-e$: 1, 4, 6; $C_{21}/e$: 2, 4, 6 \\
    & $\{2, 11\}$ & $C_{21}-e$: 1, 3, 4; $C_{21}/e$: 1, 2, 3 \\
    & $\{6, 11\}$ & $C_{21}-e$: 1, 2, 3; $C_{21}/e$: 1, 3, 6 \\
    & $\{6, 13\}$ & $C_{21}-e$: 1, 2, 3; $C_{21}/e$: 1, 2, 3 \\
    & $\{8, 13\}$ & $C_{21}-e$: 1, 2, 3; $C_{21}/e$: 1, 2, 3 \\
    \hline
   
  22 & $\{1, 8\}$ & $C_{22}-e$: 2, 3, 4; $C_{22}/e$: 1, 2, 3 \\
    & $\{1, 9\}$ & $C_{22}-e$: 2, 4, 5; $C_{22}/e$: 1, 4, 5 \\
    & $\{3, 5\}$ & $C_{22}-e$: 1, 2, 4; $C_{22}/e$: 1, 2, 3 \\
    & $\{3, 8\}$ & $C_{22}-e$: 1, 2, 4; $C_{22}/e$: 1, 2, 3 \\
    & $\{3, 9\}$ & $C_{22}-e$: 1, 4, 5; $C_{22}/e$: 3, 4, 5 \\
    \hline
   
  23 & $\{1, 8\}$ & $C_{23}-e$: 2, 3, 4; $C_{23}/e$: 1, 2, 3 \\
    & $\{1, 9\}$ & $C_{23}-e$: 2, 4, 5; $C_{23}/e$: 1, 4, 5 \\
    & $\{1, 10\}$ & $C_{23}-e$: 2, 3, 4; $C_{23}/e$: 1, 2, 3 \\
    & $\{1, 11\}$ & $C_{23}-e$: 2, 3, 4; $C_{23}/e$: 1, 2, 3 \\
    & $\{3, 5\}$ & $C_{23}-e$: 1, 2, 4; $C_{23}/e$: 1, 2, 3 \\
    & $\{3, 7\}$ & $C_{23}-e$: 1, 2, 4; $C_{23}/e$: 1, 2, 3 \\
    & $\{3, 9\}$ & $C_{23}-e$: 1, 4, 5; $C_{23}/e$: 3, 4, 5 \\
    & $\{3, 14\}$ & $C_{23}-e$: 1, 2, 4; $C_{23}/e$: 1, 2, 3 \\
    & $\{4, 7\}$ & $C_{23}-e$: 1, 2, 3; $C_{23}/e$: 1, 2, 3 \\
    & $\{4, 10\}$ & $C_{23}-e$: 1, 2, 3; $C_{23}/e$: 2, 3, 4 \\
    & $\{4, 14\}$ & $C_{23}-e$: 1, 2, 3; $C_{23}/e$: 1, 2, 4 \\
    & $\{5, 6\}$ & $C_{23}-e$: 1, 2, 3; $C_{23}/e$: 1, 2, 3 \\
    & $\{5, 8\}$ & $C_{23}-e$: 1, 2, 3; $C_{23}/e$: 1, 2, 3 \\
    & $\{5, 10\}$ & $C_{23}-e$: 1, 2, 3; $C_{23}/e$: 2, 3, 5 \\
    & $\{5, 13\}$ & $C_{23}-e$: 1, 2, 3; $C_{23}/e$: 1, 3, 4 \\
    & $\{7, 8\}$ & $C_{23}-e$: 1, 2, 3; $C_{23}/e$: 1, 2, 3 \\
    & $\{7, 11\}$ & $C_{23}-e$: 1, 2, 3; $C_{23}/e$: 2, 3, 4 \\
    & $\{8, 14\}$ & $C_{23}-e$: 1, 2, 3; $C_{23}/e$: 1, 2, 5 \\
    \hline
   
  24 & $\{1, 8\}$ & $C_{24}-e$: 2, 3, 4; $C_{24}/e$: 1, 2, 3 \\
    & $\{1, 9\}$ & $C_{24}-e$: 2, 4, 5; $C_{24}/e$: 1, 4, 5 \\
    & $\{1, 10\}$ & $C_{24}-e$: 2, 3, 4; $C_{24}/e$: 1, 2, 3 \\
    & $\{3, 5\}$ & $C_{24}-e$: 1, 2, 4; $C_{24}/e$: 1, 2, 3 \\
    & $\{3, 8\}$ & $C_{24}-e$: 1, 2, 4; $C_{24}/e$: 1, 2, 3 \\
    & $\{3, 9\}$ & $C_{24}-e$: 1, 4, 5; $C_{24}/e$: 3, 4, 5 \\
    & $\{5, 6\}$ & $C_{24}-e$: 1, 2, 4; $C_{24}/e$: 1, 2, 3 \\
    & $\{5, 10\}$ & $C_{24}-e$: 1, 2, 3; $C_{24}/e$: 2, 3, 5 \\
    \hline
   
\hline\end{tabular}
\caption{Every proper minor of $C_n$ is $3$-apex (continued)}
\label{tblPM3A5}%
\end{center}\end{table}

\begin{table}[hp]\begin{center}\begin{tabular}{c|c|l}
$n$ & Edge $e$ & Apex Sets  \\\hline\hline
   
  25 & $\{1, 8\}$ & $C_{25}-e$: 2, 3, 4; $C_{25}/e$: 1, 2, 3 \\
    & $\{1, 9\}$ & $C_{25}-e$: 2, 4, 5; $C_{25}/e$: 1, 4, 5 \\
    & $\{1, 11\}$ & $C_{25}-e$: 2, 3, 4; $C_{25}/e$: 1, 2, 3 \\
    & $\{2, 7\}$ & $C_{25}-e$: 1, 3, 4; $C_{25}/e$: 1, 2, 3 \\
    & $\{2, 9\}$ & $C_{25}-e$: 1, 4, 5; $C_{25}/e$: 2, 4, 5 \\
    & $\{2, 12\}$ & $C_{25}-e$: 1, 3, 4; $C_{25}/e$: 1, 2, 3 \\
    & $\{2, 13\}$ & $C_{25}-e$: 1, 3, 4; $C_{25}/e$: 1, 2, 3 \\
    & $\{3, 4\}$ & $C_{25}-e$: 1, 2, 5; $C_{25}/e$: 1, 2, 3 \\
    & $\{3, 6\}$ & $C_{25}-e$: 1, 2, 4; $C_{25}/e$: 1, 2, 3 \\
    & $\{3, 8\}$ & $C_{25}-e$: 1, 2, 4; $C_{25}/e$: 1, 2, 3 \\
    & $\{3, 9\}$ & $C_{25}-e$: 1, 4, 5; $C_{25}/e$: 3, 4, 5 \\
    & $\{3, 14\}$ & $C_{25}-e$: 1, 2, 4; $C_{25}/e$: 1, 2, 3 \\
    & $\{6, 8\}$ & $C_{25}-e$: 1, 2, 3; $C_{25}/e$: 1, 2, 3 \\
    & $\{6, 11\}$ & $C_{25}-e$: 1, 2, 3; $C_{25}/e$: 2, 3, 4 \\
    & $\{6, 12\}$ & $C_{25}-e$: 1, 2, 3; $C_{25}/e$: 1, 3, 5 \\
    & $\{8, 13\}$ & $C_{25}-e$: 1, 2, 3; $C_{25}/e$: 1, 3, 4 \\
    \hline
   
  26 & $\{1, 8\}$ & $C_{26}-e$: 2, 3, 4; $C_{26}/e$: 1, 2, 3 \\
    & $\{1, 9\}$ & $C_{26}-e$: 2, 4, 5; $C_{26}/e$: 1, 4, 5 \\
    & $\{1, 10\}$ & $C_{26}-e$: 2, 3, 4; $C_{26}/e$: 1, 2, 3 \\
    & $\{1, 11\}$ & $C_{26}-e$: 2, 3, 4; $C_{26}/e$: 1, 2, 3 \\
    & $\{3, 4\}$ & $C_{26}-e$: 1, 2, 5; $C_{26}/e$: 1, 2, 3 \\
    & $\{3, 5\}$ & $C_{26}-e$: 1, 2, 4; $C_{26}/e$: 1, 2, 3 \\
    & $\{3, 9\}$ & $C_{26}-e$: 1, 4, 5; $C_{26}/e$: 3, 4, 5 \\
    & $\{3, 14\}$ & $C_{26}-e$: 1, 2, 4; $C_{26}/e$: 1, 2, 3 \\
    & $\{4, 7\}$ & $C_{26}-e$: 1, 2, 3; $C_{26}/e$: 1, 2, 3 \\
    & $\{4, 10\}$ & $C_{26}-e$: 1, 2, 3; $C_{26}/e$: 2, 3, 4 \\
    & $\{5, 6\}$ & $C_{26}-e$: 1, 2, 3; $C_{26}/e$: 1, 2, 3 \\
    & $\{5, 7\}$ & $C_{26}-e$: 1, 2, 3; $C_{26}/e$: 1, 2, 3 \\
    & $\{5, 10\}$ & $C_{26}-e$: 1, 2, 3; $C_{26}/e$: 2, 3, 5 \\
    & $\{5, 13\}$ & $C_{26}-e$: 1, 2, 3; $C_{26}/e$: 1, 3, 4 \\
    & $\{7, 11\}$ & $C_{26}-e$: 1, 2, 3; $C_{26}/e$: 2, 3, 4 \\
    & $\{7, 14\}$ & $C_{26}-e$: 1, 2, 3; $C_{26}/e$: 1, 2, 4 \\
    \hline
   
  27 & $\{1, 8\}$ & $C_{27}-e$: 2, 3, 4; $C_{27}/e$: 1, 2, 3 \\
    & $\{1, 9\}$ & $C_{27}-e$: 2, 4, 5; $C_{27}/e$: 1, 4, 5 \\
    & $\{2, 5\}$ & $C_{27}-e$: 1, 3, 4; $C_{27}/e$: 1, 2, 3 \\
    & $\{2, 7\}$ & $C_{27}-e$: 1, 3, 4; $C_{27}/e$: 1, 2, 3 \\
    & $\{2, 9\}$ & $C_{27}-e$: 1, 4, 5; $C_{27}/e$: 2, 4, 5 \\
    \hline
   
  28 & $\{1, 8\}$ & $C_{28}-e$: 2, 3, 4; $C_{28}/e$: 1, 2, 3 \\
    & $\{1, 9\}$ & $C_{28}-e$: 2, 4, 5; $C_{28}/e$: 1, 4, 5 \\
    \hline
   
\hline\end{tabular}
\caption{Every proper minor of $C_n$ is $3$-apex. (continued)}
\label{tblPM3A6}%
\end{center}\end{table}

\begin{table}[hp]\begin{center}\begin{tabular}{c|c|l}
$n$ & Edge $e$ & Apex Sets  \\\hline\hline
   
  29 & $\{1, 8\}$ & $C_{29}-e$: 2, 3, 4; $C_{29}/e$: 1, 2, 3 \\
    & $\{1, 9\}$ & $C_{29}-e$: 2, 4, 5; $C_{29}/e$: 1, 4, 5 \\
    & $\{1, 11\}$ & $C_{29}-e$: 2, 3, 4; $C_{29}/e$: 1, 2, 3 \\
    & $\{3, 6\}$ & $C_{29}-e$: 1, 2, 4; $C_{29}/e$: 1, 2, 3 \\
    & $\{3, 9\}$ & $C_{29}-e$: 1, 4, 5; $C_{29}/e$: 3, 4, 5 \\
    & $\{3, 15\}$ & $C_{29}-e$: 1, 2, 4; $C_{29}/e$: 1, 2, 3 \\
    & $\{6, 8\}$ & $C_{29}-e$: 1, 2, 3; $C_{29}/e$: 1, 2, 3 \\
    & $\{6, 11\}$ & $C_{29}-e$: 1, 2, 3; $C_{29}/e$: 2, 3, 4 \\
    & $\{8, 15\}$ & $C_{29}-e$: 1, 2, 3; $C_{29}/e$: 1, 2, 4 \\
    \hline
   
  30 & $\{1, 8\}$ & $C_{30}-e$: 2, 3, 4; $C_{30}/e$: 1, 2, 3 \\
    & $\{1, 9\}$ & $C_{30}-e$: 2, 4, 5; $C_{30}/e$: 1, 4, 5 \\
    & $\{1, 10\}$ & $C_{30}-e$: 2, 3, 4; $C_{30}/e$: 1, 2, 3 \\
    & $\{1, 11\}$ & $C_{30}-e$: 2, 3, 4; $C_{30}/e$: 1, 2, 3 \\
    & $\{3, 7\}$ & $C_{30}-e$: 1, 2, 5; $C_{30}/e$: 1, 2, 3 \\
    & $\{3, 9\}$ & $C_{30}-e$: 1, 4, 5; $C_{30}/e$: 3, 4, 5 \\
    & $\{3, 14\}$ & $C_{30}-e$: 1, 2, 4; $C_{30}/e$: 1, 2, 3 \\
    & $\{4, 7\}$ & $C_{30}-e$: 1, 2, 5; $C_{30}/e$: 1, 2, 3 \\
    & $\{4, 10\}$ & $C_{30}-e$: 1, 2, 3; $C_{30}/e$: 2, 3, 4 \\
    & $\{4, 14\}$ & $C_{30}-e$: 1, 2, 3; $C_{30}/e$: 1, 2, 4 \\
    & $\{7, 11\}$ & $C_{30}-e$: 1, 2, 3; $C_{30}/e$: 2, 3, 4 \\
    \hline
   
  31 & $\{1, 8\}$ & $C_{31}-e$: 2, 3, 4; $C_{31}/e$: 1, 2, 3 \\
    & $\{1, 9\}$ & $C_{31}-e$: 2, 4, 5; $C_{31}/e$: 1, 4, 5 \\
    & $\{1, 10\}$ & $C_{31}-e$: 2, 3, 4; $C_{31}/e$: 1, 2, 3 \\
    & $\{4, 7\}$ & $C_{31}-e$: 1, 2, 3; $C_{31}/e$: 1, 2, 3 \\
    & $\{4, 10\}$ & $C_{31}-e$: 1, 2, 3; $C_{31}/e$: 2, 3, 4 \\
    & $\{5, 6\}$ & $C_{31}-e$: 1, 2, 3; $C_{31}/e$: 1, 2, 3 \\
    & $\{5, 10\}$ & $C_{31}-e$: 1, 2, 3; $C_{31}/e$: 2, 3, 5 \\
    \hline
   
  32 & $\{1, 8\}$ & $C_{32}-e$: 2, 3, 4; $C_{32}/e$: 1, 2, 3 \\
    & $\{1, 9\}$ & $C_{32}-e$: 2, 4, 5; $C_{32}/e$: 1, 4, 5 \\
   
\hline\end{tabular}
\caption{Every proper minor of $C_n$ is $3$-apex (continued)}
\label{tblPM3A7}%
\end{center}\end{table}

\end{document}